\title{An Analogue of Milnor's Invariants for Knots in 3-Manifolds}
\author{Miriam Kuzbary}
\address{School of Mathematics\\
Georgia Institute of Technology\\
686 Cherry Street\\
Atlanta, GA 30332-0160 USA}
\email{kuzbary@gatech.edu}
\urladdr{mkuzbary3.math.gatech.edu} 
\newtheorem{thm}{Theorem}[section]
\newtheorem{prop}[thm]{Proposition}
\newtheorem{lem}[thm]{Lemma}
\newtheorem{cor}[thm]{Corollary}
\theoremstyle{definition}
\newtheorem{definition}[thm]{Definition}
\theoremstyle{remark}
\newtheorem{remark}[thm]{Remark}
\numberwithin{equation}{section}
\newcommand{\Ml}{\mathop\#\limits^{\ell}S^1 \times S^2}
\newcommand{\inlineMl}{\#^{\ell}S^1 \times S^2}
\begin{document}

\begin{abstract}    % type your abstract below

Milnor's invariants are some of the more fundamental oriented link concordance invariants; they behave as higher order linking numbers and can be computed using combinatorial group theory (due to Milnor), Massey products (due to Turaev and Porter), and higher order intersections (due to Cochran). In this paper, we generalize the first non-vanishing Milnor's invariants to oriented knots inside a closed, oriented $3$-manifold $M$. We call this the Dwyer number of a knot and show methods to compute it for null-homologous knots inside connected sums of $S^1 \times S^2$.  We further show in this case the Dwyer number provides the weight of the first non-vanishing Massey product in the knot complement in the ambient manifold.  Additionally, we prove the Dwyer number detects a family of null-homotopic knots K in $\inlineMl$ bounding smoothly embedded disks in $\natural^{\ell} D^2 \times S^2$ which are not concordant to the unknot.

\end{abstract}

\maketitle

%%%%%%%%%%%%%%%%%%%%   Start of main body of article

\section{Introduction}

In order to use knots and links to understand $4$-manifold topology, we examine their equivalence classes up to a $4$-dimensional relation called concordance.  We say two $n$-component links $L_0$ and $L_1$ are concordant if there are $n$ smooth, disjoint, properly embedded annuli in $S^3 \times I$ with boundary $L_0 \times \{0\} \sqcup -L_1 \times \{1\}$ and an $n$-component link $L \subset S^3$ is slice if it bounds $n$ smooth, properly embedded disks in $B^4$.  Note that a knot is a link with one component. Knots modulo concordance with the operation connected sum forms the knot concordance group $\mathcal{C}$ introduced by Fox and Milnor in \cite{foxmilnor} and results about knot concordance have had many useful implications for the study of $4$-manifolds. For example, the result of Dehn surgery on two concordant knots, K and J, is two $3$-manifolds which are homology cobordant by a $4$-manifold obtained by replacing $S^1 \times D^2 \times I$ in $S^3 \times I$ by $D^2 \times S^1 \times I$ with appropriate framing.

From this perspective, it is a pressing issue that every $3$-manifold is the result of Dehn surgery on a link, and not necessarily a knot. Moreover, there are concordance invariants of links, such as linking number, which are not simply a generalization of some quantity for knots but are a fundamentally distinct idea unique to these objects with multiple components. To this end, this paper addresses the following questions:

\begin{itemize}
\item \textit{Can concordance invariants of links in $S^3$ give information about knots in more complicated $3$-manifolds obtained by Dehn surgery on some (possibly different) link?}
\item \textit{Can these invariants give more subtle information about links which are distinct in concordance than previously known techniques?}
\end{itemize}

We address these questions using Milnor's link concordance invariants defined in \cite{milnor54}, which are useful precisely because they detect subtle linking data not obtained from simply generalizing invariants for knots. In particular, linking number is a Milnor invariant of weight 2. The higher weight Milnor's invariants can even detect nontriviality of links where each component is the unknot and removal of any single component results in the unlink, for example, the Milnor invariants of weight 3 (also known as the triple linking number) obstruct the Borromean rings from being slice.  Note that Milnor's invariants are topological concordance invariants and not invariants of smooth concordance; however, in this article we work in the smooth category.

Given a knot $K$ in a $3$-manifold M, we define a new integer-valued concordance invariant called the Dwyer number, denoted $D(K, \gamma)$, where $\gamma$ is a curve homologous to $K$ with certain technical properties.  This invariant relies on certain subgroups of the second homology of the knot complement which encode information about the lower central series quotients of the fundamental group of the knot complement. As detailed in Section \ref{section:background}, this is exactly the information the Milnor's invariants give us for links.  We say two knots $K_0$ and $K_1$ in $M$ are concordant if there is a smooth, properly embedded annulus in $M \times I$ with boundary $K_0 \times \{0\} \sqcup -K_1 \times \{1\}$ where $-K_1$ is $K_1$ with the opposite orientation. 

\begin{restatable*}{thm}{concinvt}\label{thm:concinvt} $D(K, \gamma)$ is an invariant of knot concordance in $M \times I$.
\end{restatable*}

The construction of $D(K, \gamma)$ is quite different from that of Milnor as the combinatorial group theory tools by Magnus in \cite{magnus} exploited by Milnor in his original work do not directly apply.   Recall the lower central series of a group $G$ is defined recursively by $G_1 = G$ and $G_{m}=[G, G_{m-1}]$.  Milnor's original definition used nice properties of the lower central series quotients of free groups; in our definition we constructed an invariant using the lower central series quotients of the knot group using a somewhat different approach. However, will see in Section \ref{section:dwyernum} that in certain cases this new invariant shares many of the useful properties of Milnor's invariants, in particular, its relationship with fundamental group of the link complement in the ambient $3$-manifold. Note that Milnor's invariants have been generalized to knots in specific non-trivial homotopy classes of Seifert fiber spaces by D. Miller \cite{miller} and knots in prime manifolds which are non-trivial in homotopy by Heck \cite{heck}.

In the case where $K$ is null-homologous in $M$, we can take $\gamma$ to be an unknot and denote this invariant $D(K)$. The Dwyer number detects which elements of $H_2(M)$ can be represented by $G_m$-surfaces where $G=\pi_1(M \setminus \nu(K), *)$ as detailed Section \ref{section:dwyernumgen}. These objects can be viewed as the continuous image of $2$-complexes historically called gropes. For reasons clear from the name,  in this work we will instead refer to a grope as a surface tower as suggested by Scott Carter and Ian Agol. When $ K \subset \#^{\ell}S^1 \times S^2$, $D(K)$ can be viewed as a generalization of the lowest order non-vanishing Milnor's $\bar{\mu}$-invariant of a link as indicated by Theorem \ref{thm:longitude}.  Throughout this paper, we will refer to the group $\pi_1(M \setminus \nu(L), *)$ of a knot or link $L$ in a $3$-manifold $M$ by $G(L, M)$,  and suppress $M$ from the notation when the ambient manifold is clear from context.

\begin{restatable*}{thm}{longitudethm} \label{thm:longitude}
If $K \subset \inlineMl$ is a null-homologous knot with $D(K) = q$, then the longitude of $K$ lies in $G(K)_{q-1}$ and not $G(K)_{q}$.
\end{restatable*}

In this special case $K \subset \inlineMl$, $D(K)$ also gives the weight of the first non-vanishing Massey product in the knot complement just as Milnor's invariants do for links in $S^3$.

\begin{restatable*}{prop}{masseythm} \label{thm:massey}
If $D(K) = q$ then in $H^*(\inlineMl \setminus \nu(K))$ all non-vanishing Massey products are weight $ \geq q$. 
\end{restatable*}

The original motivation for this project comes from a construction in Heegaard Floer homology called the knotification of a link. This construction takes an $n$-component link $L \subset S^3$ and forms a knot $\kappa(L)$ inside $\#^{n-1}S^1 \times S^2$. The knotification of a link is particularly important as it appears in the original definition by Ozsv\'{a}th and Szab\'{o} in \cite{ozszhololinkfloer} of the knot Floer complex for a link $L \subset S^3$, and the resulting complex is exactly the link Floer complex of $L$ with all $U_i$ variables set equal to a single $U$ variable and the grading appropriately shifted. Our work arose from examining the knotification of a link $L \subset S^3$ and determining what linking data from $L$ remained detectable in the knotification in a connected sum of copies of $S^1 \times S^2$. In future work, we hope to use these ideas to detect this higher order linking data in the Heegaard Floer homology setting itself. To this end, we can use our work to predict properties of some knotified links based on the Milnor invariants of a link in $S^3$.

\begin{restatable*}{prop}{knotifiedbound}Let $L \subset S^3$ be an $n$-component link whose first nonzero Milnor invariant $\overline{\mu}_L(I)$ is weight $q$ for some positive integer $q$. Then $D(\kappa(L)) \geq \left \lceil{\frac{q-1}{n}}\right \rceil $.
\end{restatable*}

While it is interesting for those of us who care about Milnor's invariants to have this generalization, it is compelling to exhibit situations where this invariant can distinguish knots not previously able to be distinguished.

Recall for a knot $K$ in $S^3$, $K$ being slice in $B^4$ is equivalent to $K$ being concordant to the unknot in $S^3 \times I$. This is not true for knots in a general $3$-manifold $M$ bounding a $4$-manifold as demonstrated in the following theorem. Results in this work about the concordance of knots in $(\inlineMl) \times I$ will indicate how different the study of knot concordance in this setting is from that of knots in $S^3$. Furthermore, these results motivate the definition of a new link concordance group in upcoming joint work with Matthew Hedden.

\begin{restatable*}{prop}{slicenotconc}\label{thm:slicenotconc}Let $L \subset S^3$ be an $n+1$-component link with slice components such that an $n$-component sublink $U$ is the unlink. Then, the image of $L \setminus U$ after performing $0$-surgery on $U$ is a knot $K\subset \#^n S^1 \times S^2$ which is slice in $\natural^n D^2 \times S^2$ and, if $L$ had at least one non-vanishing Milnor invariant of some weight $I$, is not concordant to the unknot in $ \big(\#^n S^1 \times S^2 \big) \times I$.
\end{restatable*}

\begin{figure}[h] 
\centering
  \includegraphics[width=.7\linewidth]{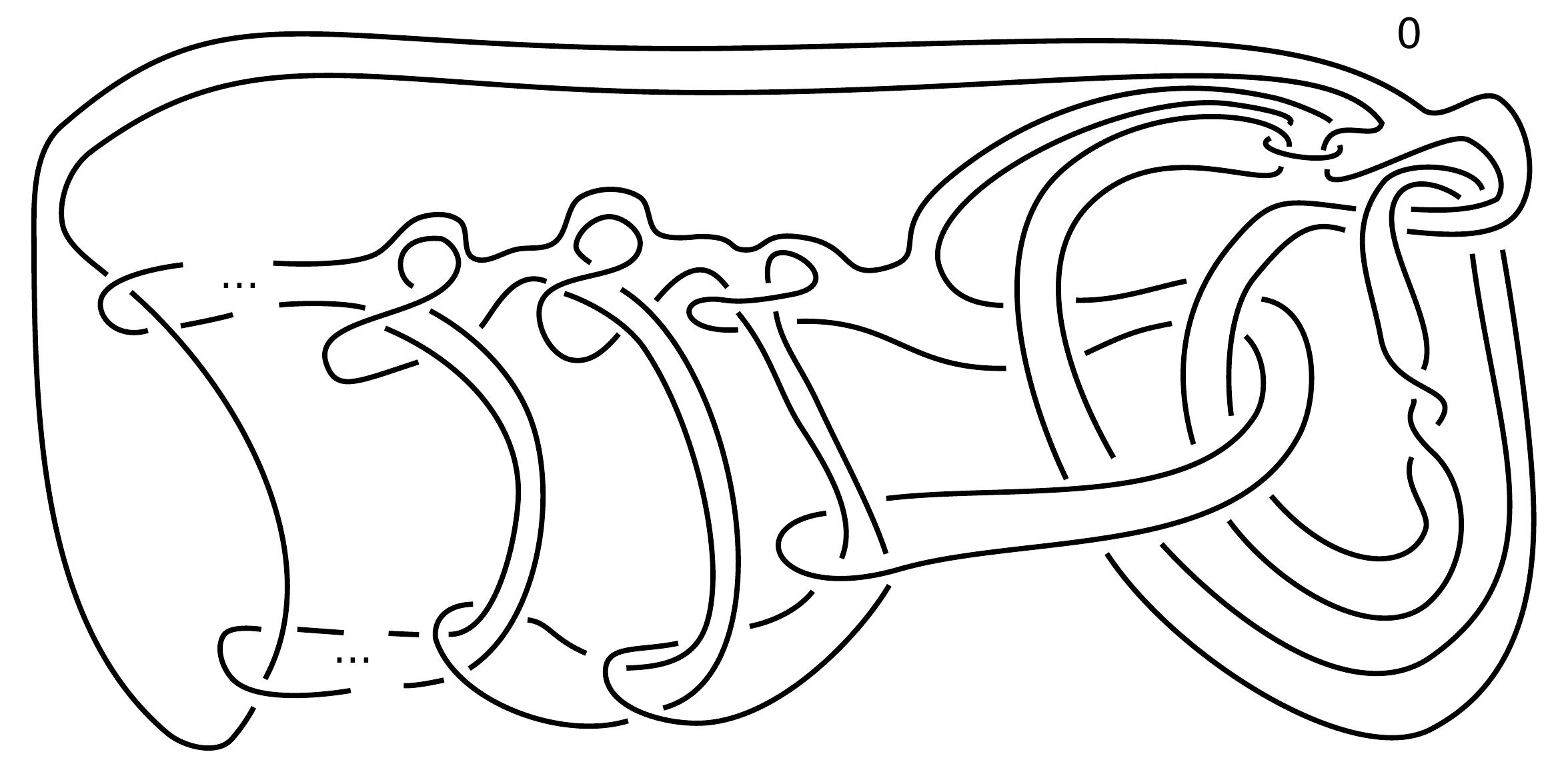}
\caption{A family of knots $L_{2n}$ in $S^1 \times S^2$ with $D(L_{2n})=2n-1$, where $n$ refers to the number of copies of the repeating tangle in the middle of the knot.}
\label{fig:L2ninmfd}
\end{figure}

\begin{restatable*}{thm}{realization} \label{thm:realization}
For each $i\in \Z_{+}$ there is a null-homotopic family of distinct knots $$\{L_{2n} \subset \#^i S^1 \times S^2 \mid n \in \Z_{+}\}$$ with trivial algebraic self-linking number (in the sense of Schneiderman in \cite{schn}) which bound a smooth, properly embedded disk in $\natural^i D^2 \times S^2$ but is not concordant to the trivial knot in $(\#^i S^1 \times S^2) \times I$.
\end{restatable*}

During the course of this work, other researchers in low dimensional topology worked on a similar problem and also exhibited examples of knots in $S^1 \times S^2$ which are slice in $D^2 \times S^2$ and not concordant to the unknot in $(S^1 \times S^2) \times I$. The covering link techniques of Davis, Nagel, Park, and Ray in \cite{DNPR} and Friedl, Nagel, Orson, and Powell in \cite{FNOP} distinguish $L_{2n}$ from being concordant to the trivial link, but do not distinguish $L_{2n}$ from $L_{2k}$ where $n \neq k$ as the linking numbers associated to their covering links are all $-3$. Additionally, the algebraic self-linking numbers of \cite{schn} are all trivial from a straightforward calculation using the obvious null-homotopy. 

Finally,  as detailed by Celoria in \cite{celoriaac} the knot concordance group acts on the set of concordance classes of knots in a $3$-manifold $M$ by local knotting.  The quotient by this action is the set of so called almost-concordance classes of knots,  and Celoria conjectured that the trivial homotopy class in each $3$-manifold contains infinitely many knot representatives,  distinct up to almost-concordance.  This question was addressed by \cite{FNOP}, \cite{DNPR}, and Yildiz in \cite{yildiz} at around the same time the bulk of the work in this paper was completed,  and the conjecture was proven in \cite{FNOP}.  As demonstrated by the examples in Theorem \ref{thm:realization}, the techniques from these papers are not sufficient to distinguish our examples from each other, and it is in this case that the higher order linking data detected by the Dwyer number is useful.  Therefore,  we present the following proposition to indicate that the Dwyer number can be used to study almost-concordance classes in more detail. 

\begin{restatable*}{prop}{almostconc}\label{thm:almostconc}
Fix $\ell \in \Z_{\geq 0}$ and let $K$ be a null-homologous knot in $\inlineMl$.  Then $D(K)$ is an invariant of almost concordance in $\inlineMl$. 
\end{restatable*}

\subsection{Future Questions}
\begin{enumerate}
\item Is there a relationship between $D(K,\gamma)$ for $K \subset M$ and the Milnor number of a 3-manifold M defined by Cochran and Melvin? 
\item Are there examples of knots $K \subset M$ with finite $D(K, \gamma)$ where $\pi_1(M, \star)$ is not free?
\item For any closed, oriented 3-manifold M, and each homology class $x \in H_1(M)$ is there a smooth embedded curve $\gamma \in x$ such that,  for any knot $K \in x$, there is a homomorphism $\psi$ from $G(\gamma)$ to $G(K)$ inducing an isomorphism from $H_1(M \setminus \nu(\gamma))$ to $H_1(M \setminus \nu(K))$?
\item Is there an unordered version of the first non-vanishing Milnor's invariants defined by taking the quotient of the free group by the action of the symmetric group and looking at the image of longitudes in this result? If such a thing is well-defined, can you use this idea to help define higher order tree valued invariants in the sense of Schneiderman and Teichner in \cite{schnteich}?
\item By Theorem \ref{thm:massey},  $D(K)$ for $K \subset \#^{\ell} S^1 \times S^2$ gives a lower bound on the weight of non-vanishing Massey products on $\#^{\ell} S^1 \times S^2 \setminus \nu(K)$.  Which of these products is well-defined and are they also concordance invariants?
\end{enumerate}

\subsection{Acknowledgments} The author was supported by an NSF Graduate Research Fellowship under Grant No. 1450681 as well as an AAUW American Fellowships Dissertation Fellowship. Additionally, the author was partially supported by NSF grant DMS-1745583. The author would like to thank her advisor, Shelly Harvey, for her mentorship during this project and Matthew Hedden for his support and many wonderful math conversations. The author would also like to express her gratitude to Tim Cochran for his guidance on Milnor's invariants early in her graduate career. The author would further like to thank Laura Starkston, Jennifer Hom, and her anonymous reviewers for taking the time to give valuable feedback on this work.

\subsection{Outline} In Section \ref{section:background}, we outline the necessary background on Milnor's invariants. In Section \ref{section:dwyernumgen}, we define our invariant for the case of knots in oriented, closed $3$-manifolds and prove it is an invariant of concordance in $M \times I$. Finally, in Section \ref{section:dwyernum} we prove this invariant generalizes important properties of Milnor's invariants and express a bound for the invariant in certain cases in terms of the Milnor's invariants of an associated link.

\section{Background} \label{section:background}

To give context for the Dwyer number and justification for why it generalizes the useful properties of Milnor's invariants, the following is a short overview of the original definition of these invariants and why they are useful. While the original definition of Milnor's invariants with non-repeating indices by Milnor in \cite{milnor54} and in general in \cite{milnor57} is somewhat laborious, as we will see in this section the construction naturally extends the idea of linking number in the group theoretic setting in order detect subtle higher order linking data of a link. 
Milnor's invariants arise in three contexts: 1) combinatorial group theory, 2) cohomology, and 3) intersection theory. The connection between the group theoretic and cohomological perspectives using Massey products was conjectured by Milnor in his original definitions, but was not proven until much later by Turaev in \cite{turaev} and independently by Porter in \cite{porter}. Later, using work of Stein in \cite{stein}, Cochran showed one can exploit the duality between Massey products and iterated intersections of surfaces in order to compute the first nonvanishing Milnor's invariants in \cite{cochranmemoir}. In this section, we survey definitions and major theorems about these invariants.

Note that the fundamental group of a link complement is not itself a concordance invariant: there are many slice links with non-trivial fundamental group. However, we can extract concordance data from the fundamental group of a link through a clever use of the lower central series. Recall that for an arbitrary link in the 3-sphere, we can repackage the linking numbers between its components as a statement about the homology of the complement of the link. In this article, we will refer to a 0-framed longitude of a knot or link simply as the longitude of the knot or link. 

\begin{remark}\label{homologyandlinking}
If $L$ is an $n$-component oriented link in $S^3$ with $L_i$ the $0$-framed longitude of the $i^{th}$ component of $L$, then
$$[L_i] = \Sigma_{j=1}^n \text{lk}(L_i,L_j) \cdot x_j \in \text{H}_1(S^3 \setminus \nu(L)) = G(L) / [G(L),G(L)]$$
where $x_j$ represents the $j^{th}$ meridian of $L$. 
\end{remark}

From this perspective, one might ask if there is concordance information detected by other quotients of the fundamental group. Theorem \ref{thm:casson} indicates there is. 

\begin{thm}[Casson \cite{casson}] \label{thm:casson}
If $L_1$ and $L_2$ are concordant links in $S^3$ then $G(L_1)/G(L_1)_m$ and $G(L_2)/G(L_2)_m$ are isomorphic for all $m \geq 1$.
\end{thm}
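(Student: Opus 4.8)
The plan is to route the isomorphism $G/G_m \cong \Gamma/\Gamma_m$ through a single auxiliary group, the fundamental group of the exterior of the concordance, and to invoke Stallings' homological characterization of the lower central series quotients \cite{stallings}. Recall Stallings' integral theorem: a group homomorphism that induces an isomorphism on $H_1(-;\Z)$ and an epimorphism on $H_2(-;\Z)$ induces isomorphisms $A/A_m \xrightarrow{\sim} B/B_m$ for every finite $m$. So it suffices to build a group that maps to both $G/G_m$ and $\Gamma/\Gamma_m$ via such homomorphisms.

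Let $C \subset S^3 \times I$ be the concordance: a disjoint union of $n$ smoothly embedded annuli with $\partial C = (L_1 \times \{0\}) \sqcup (L_2 \times \{1\})$, the $i$-th annulus joining the $i$-th components. Set $E = (S^3 \times I) \setminus \nu(C)$, let $X_1 = S^3 \setminus \nu(L_1)$ and $X_2 = S^3 \setminus \nu(L_2)$ be the two link exteriors, and write $\pi = \pi_1(E,*)$. Both $X_1$ and $X_2$ sit inside $\partial E$, so inclusion induces homomorphisms $j_1 \colon G \to \pi$ and $j_2 \colon \Gamma \to \pi$. The heart of the argument is to show that each of $j_1$ and $j_2$ induces an isomorphism on $H_1(-;\Z)$ and a surjection on $H_2(-;\Z)$; granting this, Stallings gives $G/G_m \xrightarrow{\sim} \pi/\pi_m$ and $\Gamma/\Gamma_m \xrightarrow{\sim} \pi/\pi_m$, and composing one with the inverse of the other yields $G/G_m \cong \Gamma/\Gamma_m$.

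To verify the homological hypotheses I would compute $H_*(E)$ and the maps $H_*(X_k) \to H_*(E)$ directly. On first homology, Alexander duality gives $H_1(X_k) \cong \Z^n$ generated by the meridians of $L_k$, while a Mayer--Vietoris computation for the decomposition $S^3 \times I = E \cup \nu(C)$, with $\nu(C) \simeq \bigsqcup^n S^1$ and overlap $E \cap \nu(C) \simeq \bigsqcup^n T^2$, identifies $H_1(E) \cong \Z^n$ generated by the meridians of the annuli. Since the meridian of the $i$-th component of $L_k$ is carried to the meridian of the $i$-th annulus, the induced map is an isomorphism. For the $H_2$ statement I would use Poincar\'e--Lefschetz duality for the compact oriented $4$-manifold $E$, analyzing the long exact sequence of the pair $(E, X_k)$ against its dual sequence to conclude that $H_2(X_k) \to H_2(E)$ is onto. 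Passing from spaces to groups is then automatic: $H_1$ of a space equals $H_1$ of its fundamental group, and Hopf's theorem supplies surjections $H_2(Y) \twoheadrightarrow H_2(\pi_1 Y)$ fitting into a commuting square, so an epimorphism on $H_2$ of spaces forces an epimorphism on $H_2$ of the corresponding groups.

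I expect the main obstacle to be the $H_2$ surjectivity. For a single knot $H_1 = \Z$ and the concordance exterior behaves like a homology $S^1 \times D^2 \times I$, so the inclusions are visibly homology isomorphisms; for an $n$-component link one has $H_1 = \Z^n$ and $H_2 \neq 0$, and the boundary of $E$ has three parts, namely $X_1$, $X_2$, and the meridional region $\partial\nu(C) \cong \bigsqcup^n (S^1 \times I) \times S^1$. The duality bookkeeping must therefore track these tori carefully and confirm that every second-homology class of $E$ already lives in the image of the boundary link exterior, so that no extra $H_2$ obstructs the application of Stallings' theorem.
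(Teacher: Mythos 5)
The paper does not prove this statement itself---it imports it from \cite{casson}---and your argument is precisely Casson's original one: include both link exteriors into the exterior $E$ of the concordance and apply Stallings' theorem (Theorem \ref{thm:stallings}) to the two inclusion-induced maps, after checking they are isomorphisms on $H_1$ and epimorphisms on $H_2$. Your plan is correct as stated; note only that the $H_2$-surjectivity you flag as the main obstacle falls out of the same Mayer--Vietoris sequence you already use for $H_1$, since $H_3(S^3 \times I) \to H_2\bigl(\bigsqcup^n T^2\bigr) \cong \Z^n$ hits the sum of the $n$ boundary tori, so $H_2(E) \cong \Z^{n-1}$ is generated by tori around the annuli, which are visibly homologous in $E$ to the peripheral tori generating $H_2(X_k)$.
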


As a result, we have an entire family of nilpotent groups which can tell us information about link concordance. Furthermore, we have the following important result concerning maps on group homology and these nilpotent quotients.

\begin{thm}[Stallings' Integral Theorem \cite{stallings}] \label{thm:stallings}
Let $\varphi: A \rightarrow B$ be a homomorphism inducing an isomorphism on $\text{H}_1( - ; \mathbb{Z} )$ and an epimorphism on $\text{H}_2( - ; \mathbb{Z} )$. Then, for each n, $\varphi$ induces an isomorphism $A / A_m \cong B / B_m$.
\end{thm}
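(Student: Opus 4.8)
The plan is to prove the result by induction on $m$, with the five-term exact sequence in integral group homology as the central tool. Recall that for any group $G$ with normal subgroup $N$ and quotient $Q = G/N$, the low-degree terms of the Lyndon--Hochschild--Serre spectral sequence yield a natural five-term exact sequence
$$H_2(G) \to H_2(Q) \to N/[G,N] \to H_1(G) \to H_1(Q) \to 0.$$
Taking $N = G_m$ gives $[G,N] = [G,G_m] = G_{m+1}$ by the recursive definition of the lower central series, so $N/[G,N] = G_m/G_{m+1}$ and the sequence reads
$$H_2(G) \to H_2(G/G_m) \to G_m/G_{m+1} \to H_1(G) \to H_1(G/G_m) \to 0.$$
The homomorphism $\varphi$ induces a map between the corresponding sequences for $A$ and for $B$, and naturality of the spectral sequence guarantees the resulting ladder commutes.

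For the induction, the base case is $m = 2$: here $A/A_2 = H_1(A)$ and $B/B_2 = H_1(B)$, and $\varphi$ induces an isomorphism between them by hypothesis (the case $m=1$ being trivial since both quotients vanish). Assume now that $\varphi$ induces an isomorphism $A/A_m \xrightarrow{\cong} B/B_m$; I want to upgrade this to $A/A_{m+1} \xrightarrow{\cong} B/B_{m+1}$. The inductive hypothesis immediately gives isomorphisms $H_i(A/A_m) \cong H_i(B/B_m)$ for all $i$, in particular in degrees $1$ and $2$. Writing the two five-term sequences above as the rows of a commutative ladder, the five induced vertical maps are: a surjection $H_2(A) \to H_2(B)$ (the epimorphism hypothesis), an isomorphism $H_2(A/A_m) \to H_2(B/B_m)$ (inductive hypothesis), the unknown map $A_m/A_{m+1} \to B_m/B_{m+1}$, an isomorphism $H_1(A) \to H_1(B)$ (the $H_1$ hypothesis), and an isomorphism $H_1(A/A_m) \to H_1(B/B_m)$ (inductive hypothesis). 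Applying the five lemma in the form requiring only surjectivity of the leftmost map and injectivity of the rightmost yields that $A_m/A_{m+1} \to B_m/B_{m+1}$ is an isomorphism.

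To finish the inductive step, I pass to the central extensions
$$1 \to A_m/A_{m+1} \to A/A_{m+1} \to A/A_m \to 1, \qquad 1 \to B_m/B_{m+1} \to B/B_{m+1} \to B/B_m \to 1,$$
which are central because $A_m/A_{m+1}$ is by definition central in $A/A_{m+1}$, and similarly for $B$, and which $\varphi$ maps to one another. The kernel map $A_m/A_{m+1} \to B_m/B_{m+1}$ is an isomorphism by the previous paragraph, and the quotient map $A/A_m \to B/B_m$ is an isomorphism by the inductive hypothesis; the short five lemma for groups then forces the middle map $A/A_{m+1} \to B/B_{m+1}$ to be an isomorphism, completing the induction.

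The proof is structurally routine once the five-term exact sequence is in hand, so I expect the main point requiring care to be the bookkeeping around naturality: I must verify that the five-term sequences for $A$ and $B$ assemble into a genuinely commutative ladder under the maps induced by $\varphi$, which rests on the naturality of the Lyndon--Hochschild--Serre spectral sequence with respect to the compatible maps of extensions $(A, A_m) \to (B, B_m)$. A secondary subtlety worth flagging is that the hypotheses are calibrated exactly to the five lemma: an isomorphism on $H_1$ together with merely an epimorphism on $H_2$ are the minimal inputs that make the diagram chase go through, since the leftmost vertical map only has to be surjective for the conclusion to follow.
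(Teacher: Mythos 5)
Your proof is correct and takes the same route the paper itself points to: it states the natural five-term exact sequence (Theorem \ref{thm:stallingsseq}) precisely because Stallings' theorem is proved by the induction you carry out, applying the five lemma to the ladder of sequences with $N = G_m$ and then the short five lemma to the central extensions $1 \to G_m/G_{m+1} \to G/G_{m+1} \to G/G_m \to 1$. This is essentially Stallings' original argument, which the paper cites rather than reproduces, and your bookkeeping (naturality of the sequence under $\varphi(A_m) \subseteq B_m$, and the weakened five-lemma hypotheses matching the epimorphism on $H_2$) is accurate.
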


In order to use these theorems to detect whether a link is non-trivial, we must understand how the structure of the group quotients relate to the topology we are using them to study. The following lemma is well known, but we include it here along with a proof to motivate later definitions.

\begin{lem}\label{lem:foundation}
Let $L$ be an $n$-component link in $S^3$.
\begin{enumerate}
\item If $L$ has only one component (i.e. $L$ is a knot), then $G(L)/G(L)_m \cong \Z$ for all $m$.
\item If $L$ is the $n$-component unlink, then $G(L)$ is a free group generated by the meridians of $L$.
\item If $L$ is an $n$-component slice link and $F$ is a free group on $n$ letters, then $G(L)/G(L)_m \cong F / F_m $ for all $m$.
\end{enumerate}
\end{lem}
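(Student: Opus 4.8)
The plan is to treat all three parts with a single engine, namely Stallings' Integral Theorem (Theorem \ref{thm:stallings}) applied to a map that sends free generators to meridians, together with the classical Hopf surjection $H_2(Y) \twoheadrightarrow H_2(\pi_1 Y)$, which lets me convert the vanishing of the second homology of a space into the vanishing of the second homology of its fundamental group. Parts (1) and (2) serve as warmups that isolate the two homological inputs I will need in the slice case.

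For part (1), let $X = S^3 \setminus \nu(K)$, so that $H_1(X) \cong \Z$ is generated by the meridian $\mu$ and $H_2(X) = 0$, since $X$ has the homology of a circle. I would consider the homomorphism $\varphi \colon \Z \to G$ sending a generator to $\mu$. It is an isomorphism on $H_1$, and because the Hopf surjection forces $H_2(G) = 0$ from $H_2(X) = 0$, it is trivially an epimorphism on $H_2$. Stallings then gives $\Z/\Z_m \cong G/G_m$ for all $m$; as $\Z$ is abelian its lower central series collapses after the first term, so $G/G_m \cong \Z$. Part (2) is instead a purely ambient-isotopy statement, so no concordance machinery is needed: I would compute the homotopy type of the $n$-component unlink complement directly, building it up one unknotted, split component at a time and applying van Kampen (equivalently, observing that the complement is homotopy equivalent to a wedge of $n$ circles and some $2$-spheres), which exhibits $G$ as free on the $n$ meridians.

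Part (3) is the substantive case and combines the two inputs above. Let $\Delta = \Delta_1 \sqcup \cdots \sqcup \Delta_n \subset B^4$ be disjoint slice disks bounded by $L$, and set $W = B^4 \setminus \nu(\Delta)$ with $P = \pi_1(W)$. First I would compute, via Alexander duality (or the long exact sequence of the pair), that $H_1(W) \cong \Z^n$ on the meridians and $H_2(W) = 0$; the Hopf surjection then gives $H_2(P) = 0$. Now I apply Stallings twice. The inclusion $X \hookrightarrow W$ carries meridians to meridians, hence is an isomorphism on $H_1$ and, since $H_2(P) = 0$, automatically an epimorphism on $H_2$, so $G/G_m \cong P/P_m$. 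Likewise the map $\phi \colon F \to P$ sending free generators to the meridians is an isomorphism on $H_1$ and an epimorphism on $H_2$ (both second group homologies vanish), so $F/F_m \cong P/P_m$. Composing the two isomorphisms yields $G/G_m \cong F/F_m$ for all $m$.

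The main obstacle is the homological bookkeeping in part (3): verifying $H_2(W) = 0$ and, just as importantly, checking that $i_*$ and $\phi$ genuinely identify the meridian generators so that the $H_1$-hypothesis of Stallings holds on the nose. Once $H_2(W) = 0$ is in hand the $H_2$-hypotheses come for free, so the whole argument hinges on getting these two low-dimensional homology computations, and the correct generating sets, exactly right.
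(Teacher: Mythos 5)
Your proposal is correct, and parts (1) and (2) match the paper's argument (the paper is terser in part (1), simply asserting the map induces isomorphisms on homology where you spell out the Hopf surjection $H_2(X)\twoheadrightarrow H_2(\pi_1X)$ to get the $H_2$-hypothesis; that detail is genuinely needed and you supply it). Part (3) is where you diverge. The paper does not go into $B^4$ at all: it observes that a slice link is concordant to the unlink, invokes Casson's theorem (Theorem \ref{thm:casson}) to conclude that concordant links have isomorphic lower central series quotients, and then quotes part (2) to identify the unlink group with the free group $F$. You instead give the direct classical argument through the slice disk exterior $W = B^4\setminus\nu(\Delta)$: compute $H_1(W)\cong\Z^n$ on meridians and $H_2(W)=0$, then apply Stallings twice, to the inclusion $X\hookrightarrow W$ and to the meridional map $F\to\pi_1(W)$, and compose. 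Both are valid. The paper's route is shorter given that Casson's theorem has already been stated as a black box; yours is more self-contained and is essentially the proof that underlies Casson's theorem in the first place, so it makes the mechanism (vanishing of $H_2$ of the disk exterior) visible rather than hiding it inside a citation. Your homological bookkeeping checks out: Mayer--Vietoris for $B^4 = W\cup\nu(\Delta)$ with $W\cap\nu(\Delta)\simeq\sqcup_n S^1$ gives exactly the claimed $H_1$ and $H_2$, the meridians of the disks restrict to meridians of $L$ on the boundary so the $H_1$-identifications hold, and since $H_2(\pi_1 W)=0$ every $H_2$-epimorphism hypothesis is automatic.
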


\begin{proof}
\begin{enumerate}
\item Consider the homomorphism $f: \Z \rightarrow \pi_1(S^3 \setminus L, *)$ defined by sending the generator $1$ of $\Z$ to a meridian of $L$. Since $L$ is a knot, this map induces isomorphisms on homology. Thus, by Theorem \ref{thm:stallings}, $f$ induces isomorphisms on the lower central series quotients.
\item This is a straightforward calculation using the Seifert-van Kampen theorem.
\item Since $L$ is a slice link, it is concordant to the $n$-component unlink. By Theorem \ref{thm:casson} and the previous statement, $G/G_m$ is isomorphic to $F/F_m$ for all $m$.
\end{enumerate}

\end{proof}

This lemma first tells us that the nilpotent quotients of the fundamental group of a knot in $S^3$ will not tell us any new concordance information, as we might expect as there is nothing for the knot to ``link" with in the first place since $S^3$ is simply connected. Therefore, in the case of knots and links in $S^3$, these lower central series quotients will only be useful for studying links. As we will show, this is not the case for knots in arbitrary $3$-manifolds. Furthermore, Lemma \ref{lem:foundation} indicates that if we want to determine if an $n$-component link is non-trivial in concordance, we should examine whether the lower central series quotients of its link group are isomorphic to the lower central series quotients of a corresponding free group. One major difficulty is the lower central series quotients of free groups on $n > 1$ letters become unwieldy quite quickly. We can detect whether these quotients are isomorphic using a clever group presentation. Recall that the fundamental group consists of based homotopy classes, however, since in this article we work in nilpotent quotients of fundamental groups and changing the basepoint corresponds to conjugation, we will suppress the basing from discussion as in Section 3 of \cite{cochranmemoir}.

As observed by Milnor, the meridian homomorphism $\varphi: F \rightarrow G/G_m$ where $F=\langle x_1, ..., x_n \rangle$ is a free group on $n$ letters and $\varphi$ sends a generator $x_i$ to a meridian $\mu_i$ is surjective and for any longitude $l_i$ there is a word $R_m(l_i) \in F$ such that $\varphi(R_m(l_i)) \equiv l_i \mod G_m$. In the language of Cochran, we call any such element $R_m(l_i)$ an $m$-rewrite of $l_i$.
%We include Turaev's proof for completion and to give intuition about why this construction is effective.  

\begin{thm}[Milnor \cite{milnor57}] \label{thm:presentation} Let $L \subset S^3$ be an $n$-component link. For $m \geq 1$, there is a presentation
$$\frac{G(L)}{G(L)_m} \cong \langle x_1,..., x_n \mid [x_i, R_m(l_i)], 1 \leq i \leq n, F_m \rangle $$
where $x_i$ represents an $i^{th}$ meridian of L, $l_i$ represents the $i^{th}$ longitude of L, $F$ is the free group on the letters $x_1, ..., x_n$, and $R_m(l_i)$ is an $m$-rewrite of $l_i$.

\end{thm}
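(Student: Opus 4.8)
The plan is to exhibit an explicit homomorphism between the two groups and then establish that it is an isomorphism by treating surjectivity and injectivity separately. First I would fix a Wirtinger presentation of $G$, whose generators are the meridians $a_1,\dots,a_s$ of the individual arcs and whose relators $a_k = w\,a_{k'}\,w^{-1}$ come from the crossings; for each component I distinguish one meridian $x_i$ and record that the $0$-framed longitude $l_i$ is the product of the conjugating words $w$ accumulated while traversing that component. Sending the free generator $x_i$ of $F$ to the chosen meridian gives a map $F \to G$, and I would first check that it descends to a surjection $F/F_m \twoheadrightarrow G/G_m$: since the meridian classes generate $\mathrm{H}_1(S^3\setminus\nu(L))$ by Remark \ref{homologyandlinking}, and a nilpotent group is generated by any subset whose image generates its abelianization, the $x_i$ generate $G/G_m$. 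Because each $R_m(l_i)$ is by definition a word in the $x_j$ representing $l_i$ modulo $G_m$, this realizes the claimed presentation as a candidate for $G/G_m$.

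Next I would verify that all the defining relators actually hold in $G/G_m$, so that the map factors through the presented group $Q_m = \langle x_1,\dots,x_n \mid [x_i, R_m(l_i)],\, F_m \rangle$. The relators $F_m$ vanish because $G/G_m$ is nilpotent of class $< m$. The relators $[x_i, R_m(l_i)]$ vanish because $x_i$ and $l_i$ are the images of the meridian and longitude on the boundary torus $T_i$ of the $i$-th component; as $\pi_1(T_i)\cong\Z^2$ is abelian, $[x_i,l_i]=1$ in $G$, whence $[x_i,R_m(l_i)]=1$ in $G/G_m$. This yields a surjection $\psi\colon Q_m \twoheadrightarrow G/G_m$.

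The remaining and genuinely hard step is injectivity of $\psi$, i.e. showing that modulo $G_m$ the Wirtinger relators impose nothing beyond the meridian--longitude commutators. The mechanism is a telescoping computation within each component: the crossing relators rewrite every arc meridian $a_{i,j}$ as a conjugate $u\,x_i\,u^{-1}$ of the distinguished meridian, where $u$ is an initial product of conjugating words, and the condition that one returns to the starting meridian after a full traversal is precisely $l_i x_i l_i^{-1} = x_i$, that is $[x_i,l_i]=1$. The subtlety is that the conjugating words involve meridians of several components and are only expressible as words in $x_1,\dots,x_n$ after passing to a nilpotent quotient, and the representative $R_m(l_i)$ itself depends on $m$. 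I would therefore run the reduction by induction on $m$: assuming $Q_m \cong G/G_m$, I would analyze the central extension $1 \to G_m/G_{m+1} \to G/G_{m+1} \to G/G_m \to 1$ and show, using that $G_m/G_{m+1}$ is generated by $m$-fold basic commutators in the $x_i$ and that promoting the relators from $R_m(l_i)$ to $R_{m+1}(l_i)$ preserves the quotient at level $m$, that $\psi$ is an isomorphism on the kernels of the two extensions; the five lemma for central extensions then forces $\psi$ to be an isomorphism at level $m+1$. The main obstacle throughout is controlling this interplay between the changing longitude representatives and the lower central series filtration. The homological alternative---checking that $\psi$ induces an isomorphism on $\mathrm{H}_1$ and a surjection on $\mathrm{H}_2$ and invoking Theorem \ref{thm:stallings}---reduces the difficulty to the same $\mathrm{H}_2$ estimate, and I would keep it in reserve should the direct telescoping bookkeeping become unwieldy.
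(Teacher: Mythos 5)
This theorem is imported from Milnor's paper and is stated in the text without proof (the surrounding discussion only sketches Turaev's geometric reading of the presentation), so there is no in-paper argument to compare against. Your outline is, in substance, Milnor's original strategy: start from a Wirtinger presentation, observe that the distinguished meridians generate each nilpotent quotient because they generate $H_1$, check that the peripheral relations $[x_i,l_i]=1$ and the relators from $F_m$ hold in $G/G_m$, and then show by induction on $m$ that nothing else is needed. The surjectivity argument and the verification that the relators hold are complete and correct as written (including the small point that $[x_i,R_m(l_i)]$ differs from $[x_i,l_i]$ only by an element of $G_{m+1}$).

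The genuine issue is that the injectivity of $\psi\colon Q_m\to G/G_m$ is the entire content of the theorem, and your proposal plans it rather than proves it. Two specific things remain open. First, the telescoping step requires showing that, modulo $G_m$, every conjugating word $u$ appearing in $a_{i,j}=u\,x_i\,u^{-1}$ can be rewritten as a word in $x_1,\dots,x_n$ \emph{in a way compatible with the Wirtinger relations}; this is Milnor's key inductive lemma (each substitution of an arc meridian by a conjugate of a distinguished meridian changes the word only by an element one step deeper in the lower central series, so the recursion stabilizes after $m$ iterations). You name this subtlety but do not resolve it. Second, the five-lemma argument needs an actual identification of the kernel of $Q_{m+1}\twoheadrightarrow Q_m$ with $G_m/G_{m+1}$; since the relator sets of $Q_m$ and $Q_{m+1}$ differ (both in $F_m$ versus $F_{m+1}$ and in $R_m(l_i)$ versus $R_{m+1}(l_i)$), one must check that the induced map on kernels is surjective \emph{and} injective, and injectivity is not formal --- it is where the longitude relations could in principle collapse part of $F_m/F_{m+1}$. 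Milnor handles this by exhibiting an explicit inverse homomorphism from his presented group to $G/G_{m+1}$ rather than by a kernel comparison. As a plan your proposal points at the right mechanism, but the proof is not yet closed at exactly the step that makes the theorem nontrivial.
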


 In summary, it is clear that for any $m \in \Z$ with $m \geq 2$, we have $$ \frac{G(L)}{G(L)_m} \cong \frac{F}{F_m} \iff [x_i, R_m(l_i)] \in F_m \text{ for each }i \iff R_m(l_i) \in F_{m-1}.$$

The remaining piece of the definition of Milnor's invariants relies on work of Magnus; there is an embedding of the free group on $n$ letters into the power series ring $\mathbb{Z}[[X_1, ...X_n]]$ of $n$ non-commuting variables defined on generators by

\begin{align*}
M(x_i) &= 1+X_i \\
M(x^{-1}_i) &= 1 - X_i+X_i^2-X_i^3+...
\end{align*} A word in the free group is in $F_q$ if and only if all the coefficients of its Magnus expansion of degree $q$ or less are 0, this proof is detailed by Magnus, Karrass, and Solitar in Chapter 5 of \cite{mks}.

\begin{definition}[Milnor \cite{milnor57}] Let $L \subset S^3$ be an oriented, ordered link. The Milnor invariants of $L$ are integers $\overline\mu_L(i_1, ..., i_k)$ each corresponding to a multi-index $(i_1, ..., i_m)$ where $i_j \in \{1, ..., n\}$ and are defined as follows.

Let $l_{i_m}$ be the $i_m^{th}$ longitude of $L$ and let $R_m(l_{i_m})$ be an $m$-rewrite as defined above. By Theorem \ref{thm:presentation}, this group element can be represented by a word $w$ in meridians $x_1, ..., x_n$. The Magnus expansion of this word is $$M(w) = 1 + \Sigma_I \epsilon_I X^{I}$$ where the sum is taken over all possible multi-indices $I = (j_1, ..., j_m)$ and $X^I$ is shorthand for $X^{j_1}...X^{j_m}$. We will refer to the coefficient of $X^I$ in $M(w)$ as $\epsilon_I(w)$.  Then,
$$\overline\mu(i_1, ..., i_m) := \epsilon_{i_1, ..., i_{m-1}}(R_m(l_{i_m})).$$

We will refer to the length of the multi-index $|i_1, ..., i_m|$ as the weight (or order) of the invariant $\overline\mu(i_1, ..., i_m)$. The integer  is well-defined if all the Milnor's invariants of weight less than $m$ are 0, otherwise, this integer is defined to be the residue class modulo
$$\Delta = \text{gcd}\{\overline\mu(I) \mid I \in \tilde{I}\}$$
where $\tilde{I}$ is obtained from $i_1, ..., i_m$ by removing one index and cyclically permuting the other indices.
\end{definition}

\begin{thm}[Casson \cite{casson}]
The $\overline{\mu}$-invariants are concordance invariants for oriented, ordered links in $S^3$.
\end{thm}

Notice that Milnor's invariants are only defined modulo certain Milnor's invariants of smaller weight. Therefore, in this work we are primarily concerned with the first Milnor's invariants which are non-zero as in \cite{cochranmemoir}.

\begin{thm}[Turaev \cite{turaev}, Porter \cite{porter}]
Let $L \subset S^3$ be an oriented, ordered, $n$-component link and let $L_i$ refer to its $i^{th}$ component. Let $u_i \in H^1(S^3 \setminus L_i)$ be the Alexander dual of the generator of $H_1(L_i)$ determined by the orientation of $L_i$. For $i$, $j \in \{ 1, ..., n\}$, set $\gamma_{i,j}$ equal to the Lefschetz dual of the element in $H_1(S^3, L_i \cup L_j)$ determined by a path from $L_i$ to $L_j$.

Let $(l_1, ..., l_p)$ be a sequence of integers with $l_k \in \{1, ..., n\}$. If all Milnor invariants of $L$ of weight less than $p$ are $0$, then the Massey product $\langle u_{l_1}, ..., u_{l_p} \rangle$ in $S^3 \setminus L$ is defined and
$$\langle u_{l_1}, ..., u_{l_p} \rangle = (-1)^p \overline{\mu}_L(l_1, ..., l_p) \gamma_{l_1, l_p}.$$
\end{thm}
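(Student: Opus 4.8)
The plan is to prove this by translating both sides into the language of the lower central series of $G = \pi_1(S^3 \setminus \nu(L))$ and matching them via Fox's free differential calculus. The starting observation is that a link complement $X = S^3 \setminus \nu(L)$ is aspherical (it is an irreducible $3$-manifold with infinite fundamental group), so $X$ is a $K(G,1)$ and all cohomology and Massey products may be computed in the group cohomology $H^*(G;\Z)$ using the bar resolution. Under the identification $H^1(X) \cong \mathrm{Hom}(G,\Z)$, the class $u_i$ is the homomorphism sending a meridian $x_j$ to $\delta_{ij}$ (since $u_i$ evaluated on a loop records its linking number with $L_i$), and the first-order statement -- that cup products $u_i \cup u_j$ record linking numbers -- is the cohomological shadow of Remark \ref{homologyandlinking}. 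The goal is to show that the $p$-fold Massey product is the corresponding higher-order shadow of Milnor's presentation (Theorem \ref{thm:presentation}).

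First I would encode the hypothesis group-theoretically. By the Magnus embedding and Theorem \ref{thm:presentation}, the assumption that all Milnor invariants of weight $< p$ vanish is equivalent to the statement that each longitude $l_i$ lies in $F_{p-1}$ modulo $G_p$, and that the first possibly-nonzero Magnus coefficients of $l_{l_p}$ occur in degree $p-1$, where the coefficient of $X_{l_1}\cdots X_{l_{p-1}}$ is by definition $\overline{\mu}_L(l_1,\ldots,l_p)$. I would record the classical identity that this coefficient equals the iterated Fox derivative $\partial^{p-1} l_{l_p}/\partial x_{l_1}\cdots\partial x_{l_{p-1}}$ evaluated under the augmentation $\Z G \to \Z$; this is the combinatorial object I intend to match against the Massey product.

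Next I would build a defining system for $\langle u_{l_1},\ldots,u_{l_p}\rangle$ directly from a truncated Magnus/unipotent representation. Consider the homomorphism $\theta \colon F \to 1 + I$, $\theta(x_j) = 1 + X_j$, into the units of the ring of non-commuting power series truncated in degree $p-1$; assembling the coefficients of the words $X_{l_a}\cdots X_{l_{b-1}}$ produces a unipotent matrix representation whose superdiagonal entries are cocycles representing the $u_{l_a}$ and whose higher entries furnish exactly the cochains $a_{ab}$ of a Massey defining system, satisfying $\delta a_{ab} = \sum_{a<c<b} a_{ac}\cup a_{cb}$. The crux is that the vanishing of the weight-$<p$ invariants is precisely what allows $\theta$ to descend from $F$ to $G$ through degree $p-1$ (the relators $[x_i,R(l_i)]$ of Theorem \ref{thm:presentation} are killed to that order), which is the same condition that makes the lower Massey products vanish and hence makes $\langle u_{l_1},\ldots,u_{l_p}\rangle$ a well-defined single class rather than a coset. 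The value of the Massey product is then the obstruction $2$-cocycle measuring the failure of $\theta$ to extend one degree further, and by construction this obstruction is the degree-$(p-1)$ corner coefficient of $\theta$ evaluated on the longitude $l_{l_p}$ -- that is, the Fox derivative above, namely $\overline{\mu}_L(l_1,\ldots,l_p)$.

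Finally I would pin down the target group and the sign. Using Lefschetz duality $H^2(X) \cong H_1(X,\partial X)$, the obstruction class corresponds to a multiple of the dual of a relative $1$-cycle; the $(l_1,l_p)$ corner of the unipotent matrix records both which longitude is being read (the index $l_p$) and the choice of basing path between components (the index $l_1$), so the dual cycle is precisely $\gamma_{l_1,l_p}$, giving $\langle u_{l_1},\ldots,u_{l_p}\rangle = (-1)^p \overline{\mu}_L(l_1,\ldots,l_p)\,\gamma_{l_1,l_p}$. The factor $(-1)^p$ is bookkeeping: it comes from the alternating signs $\overline{a} = (-1)^{\deg a} a$ in the standard definition of the Massey defining system, compared with the unsigned conventions in the Magnus expansion. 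I expect the main obstacle to be exactly this last matching -- verifying that the group-cohomology obstruction cocycle coincides, sign and all, with the Magnus/Fox coefficient of the longitude, and that Lefschetz duality sends it to the specific class $\gamma_{l_1,l_p}$ rather than to some other relative cycle. An induction on the weight $p$ organizes the argument, since the inductive hypothesis supplies both the vanishing needed to define the higher product and the identification of the lower-order entries $a_{ab}$.
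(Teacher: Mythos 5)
First, a framing point: the paper does not prove this statement at all --- it is quoted as background and attributed to Turaev and Porter, so there is no in-paper proof to compare against. Judged on its own terms, your proposal is a reconstruction of what is essentially Turaev's actual argument: truncate the Magnus expansion to obtain a unipotent representation of the nilpotent quotient, read off a Massey defining system from its matrix entries, and identify the $p$-fold product with the obstruction to extending the representation one degree further, which evaluates on the longitude to a Magnus coefficient, i.e.\ a Milnor number. The descent step is also set up correctly: via Theorem \ref{thm:presentation}, vanishing of the weight $<p$ invariants forces $R_p(l_i) \in F_{p-1}$, so each relator $[x_i, R_p(l_i)]$ has Magnus expansion $1 + (\text{terms of degree} \geq p)$ and dies in the degree-$(p-1)$ truncation.

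There are, however, three genuine soft spots. (1) The opening asphericity claim is false in the generality you need: a split link complement is not irreducible (the unlink complement contains essential $2$-spheres and is homotopy equivalent to a wedge involving $S^2$'s), yet split links satisfy the hypothesis of the theorem. This is repairable --- every cochain in a defining system for a product of degree-one classes lives in degrees $1$ and $2$, and $K(G,1)$ is obtained from $X = S^3 \setminus \nu(L)$ by attaching cells of dimension $\geq 3$, so $H^1(G) \cong H^1(X)$ and $H^2(G) \hookrightarrow H^2(X)$ suffice --- but as written your reduction to group cohomology rests on a wrong assertion. (2) You construct \emph{one} defining system from $\theta$ but never establish that the product is a well-defined single class: for higher Massey products this requires all shorter products to vanish for \emph{all} defining systems (strict definedness), and the vanishing of the lower-weight $\bar{\mu}$'s must be pushed through an induction to show the indeterminacy subgroup is zero; you assert this in passing rather than prove it. (3) Identifying the class as $(-1)^p\,\overline{\mu}_L(l_1,\ldots,l_p)\,\gamma_{l_1,l_p}$ requires evaluating the obstruction cocycle against an entire generating set of $H_2(X)$ --- the fundamental classes of all boundary tori $T_j$, whose images in group homology are carried by the commutators $[x_j, \lambda_j]$ --- not just against $T_{l_p}$. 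Those other evaluations are the coefficients $\overline{\mu}_L(l_1,\ldots,l_{p-1},j)$, and one needs the cyclic-symmetry relations enjoyed by first non-vanishing Milnor invariants to see that they assemble to the stated multiple of $\gamma_{l_1,l_p}$ under Lefschetz duality. You correctly flag this last matching as the main obstacle, but flagging it is not the same as closing it; as it stands the proposal proves ``the evaluation on $T_{l_p}$ is $\pm\overline{\mu}_L(l_1,\ldots,l_p)$'' rather than the full equality of cohomology classes asserted by the theorem.
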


Notice this means we can recover the appropriate Milnor invariants by evaluating $\langle u_{l_1}, ..., u_{l_p} \rangle$ on the corresponding boundary components of $S^3 \setminus \nu(L)$.  Recall that Massey products are generalized cup products; just as cup products are dual to intersections, Massey products are dual to higher order intersections. Cochran's work  allows us to compute Milnor's invariants using surface systems which precisely encode the correct higher order intersections to calculate the lowest weight non-vanishing Milnor invariants of a link $L$ \cite{cochranmemoir}.

\section{Concordance data in the lower central series} \label{section:dwyernumgen}

In Section \ref{section:background} we introduced Milnor's invariants which detect subtle higher order linking data for a link in the $3$-sphere using the lower central series quotients of the fundamental group of the link complement. We further indicated why Milnor's invariants for a knot in $S^3$ are not useful. It is natural to ask whether this approach could detect concordance information for knots and links in $3$-manifolds other than $S^3$. As this work demonstrates, the nilpotent quotients of the fundamental group of the complement of a knot in a more complicated $3$-manifold still proves to be quite useful. In Section \ref{section:background}, we introduced a theorem of Casson which generalizes to the following well-known result which we will prove later in the section. 

\begin{prop}\label{prop:hcob} If $K_1$ and $K_2$ are concordant knots inside a closed, oriented $3$-manifold $M$, then $G(K_1)/G(K_1)_m \cong G(K_2)/G(K_2)_m$ for all $m$.
\end{prop}

To define the Dwyer number, we first go back to the original proof of Theorem \ref{thm:stallings} and examine why the condition relating maps on group homology to nilpotent quotients of the respective groups is sufficient to induce isomorphisms on nilpotent quotients. The proof of this theorem relies on the following sequence.

\begin{thm}[Stallings \cite{stallings}]\label{thm:stallingsseq}
For a group $G$ with normal subgroup $N$ there is a natural exact sequence

$$H_2(G) \rightarrow H_2\bigg(\frac{G}{N}\bigg) \rightarrow \frac{N}{[G, N]} \rightarrow H_1(G) \rightarrow H_1\bigg(\frac{G}{N}\bigg) \rightarrow 0$$.

In the case that $N = G_m$, we have 

$$H_2(G) \rightarrow H_2\bigg(\frac{G}{G_m}\bigg) \rightarrow \frac{G_m}{G_{m+1}} \rightarrow H_1(G) \rightarrow H_1\bigg(\frac{G}{G_m}\bigg) \rightarrow 0.$$

\end{thm}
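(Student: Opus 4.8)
The plan is to derive both exact sequences from the five-term exact sequence in homology attached to the Lyndon--Hochschild--Serre spectral sequence of the group extension
\[
1 \to N \to G \to G/N \to 1.
\]
Recall this spectral sequence has $E^2_{p,q} = H_p(G/N; H_q(N))$ and converges to $H_{p+q}(G)$. The first step is simply to write down its five-term exact sequence of low-degree terms, which reads
\[
H_2(G) \to H_2(G/N) \to H_0(G/N; H_1(N)) \to H_1(G) \to H_1(G/N) \to 0,
\]
where the outer maps are the standard edge homomorphisms and the middle map is the transgression.

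The main work, and the step I expect to be the crux, is identifying the middle term $H_0(G/N; H_1(N))$ with $N/[G,N]$. Since $H_1(N) = N/[N,N]$, this term is the group of coinvariants of the conjugation action of $G/N$ on $N^{\mathrm{ab}}$; that is, the quotient of $N/[N,N]$ by the subgroup generated by all elements of the form $g n g^{-1} n^{-1}$ with $g \in G$ and $n \in N$. Because the subgroup of $N$ generated by $[N,N]$ together with all commutators $[g,n]$ for $g \in G$, $n \in N$, is precisely $[G,N]$, these coinvariants are canonically isomorphic to $N/[G,N]$. Substituting this identification into the five-term sequence yields the first claimed sequence.

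For the naturality assertion, I would invoke the functoriality of the Lyndon--Hochschild--Serre spectral sequence with respect to morphisms of short exact sequences of groups: the edge maps and transgression are natural, so the whole five-term sequence is natural in the pair $(G,N)$, and the identification of the coinvariant term with $N/[G,N]$ is evidently natural as well.

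Finally, specializing to $N = G_n$ is immediate from the definition of the lower central series. By definition $[G, G_n] = G_{n+1}$, so $N/[G,N] = G_n/G_{n+1}$, and the first sequence becomes the second upon this substitution.
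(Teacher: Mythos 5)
Your proof is correct and follows the standard route: the paper itself states this result without proof, citing Stallings, and the derivation via the five-term exact sequence of the Lyndon--Hochschild--Serre spectral sequence together with the identification $H_0(G/N; H_1(N)) \cong N/[G,N]$ (coinvariants of the conjugation action on $N^{\mathrm{ab}}$) is exactly how the result is established in the cited literature. The specialization to $N = G_n$ via $[G, G_n] = G_{n+1}$ matches the paper's recursive definition of the lower central series, so nothing further is needed.
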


 This sequence indicates the significance of the kernel of the map $H_2(G) \rightarrow H_2(G / G_m )$. This kernel is called the $m^{th}$ Dwyer subgroup of $G$, and we denote it by $\Phi_m(G)$.

\begin{definition}\label{def:cc} Let $N$ be a normal subgroup of a group $G$. As defined by Cochran and Harvey in \cite{cochranharvey}, define $\Phi^N(G)$ as the image of the map $H_2(N) \rightarrow H_2(G)$. Equivalently, $\Phi^N(G)$ is the subgroup of classes which can be represented by maps of closed, oriented surfaces $f:\Sigma \rightarrow \Phi^N(G) $ such that $f_*(\pi_1(\Sigma, *) \subset N$. We call such surfaces $N$-surfaces. We can similarly define $N$-surfaces for a topological space $X$ with fundamental group $G$ and see that $\Phi^N(G)=\Phi^N(K(G,1))$. In this language, by Freedman and Teichner in Lemma 2.3 of \cite{freedmanteichner} and Definition 1.5 in \cite{cochranharvey} we see that $\Phi_m(G)$ is exactly $\Phi^{G_m}(K(G,1))$. We will therefore refer to elements of $\Phi_m(X)$ as $G_m$-surfaces.

As proven in Lemma 2.3 and further detailed in pages 533-537 in \cite{freedmanteichner}, a $G_m$-surface has a nice interpretation as the topological reformulation of an element in the $m^{th}$ term of the lower central series of $G$. More precisely, the elements of  $\Phi_m(X)$ are maps of $2$-complexes which we call surface towers \footnote{As mentioned in the introduction, historically such a $2$-complex has been referred to as a grope.} defined recursively as follows. 

\begin{definition}[See \cite{freedmanteichner}]
A surface tower has a class $m=1, ..., \infty$. We will first define a surface tower with boundary, and a closed surface tower will be the result of replacing a $2$-cell in $S^2$ with an $m$-surface tower. A surface tower (with boundary) is a pair $(\text{2-complex}, \text{circle})$.  A class $1$ surface tower is the pair  $(\text{circle}, \text{circle})$, and a class $2$ surface tower is a compact, oriented surface $\Sigma$ with one boundary component. For finite $m$, an $m$-surface tower is defined inductively as follows. Let $\{a_i, b_i \mid i =1, ..., g(\Sigma)\}$ be a standard symplectic basis of simple closed curves for $\Sigma$. For any $p_i, q_i \in \Z_+$ with $p_i+q_i \geq m$ and $p_{i_0}+q_{i_0}=m$ for at least one index $i_0$, an $m$-surface tower is constructed by gluing a $p_i$-surface tower to each $a_i$ and $q_i$-surface tower to each $b_i$. An $\infty$-surface tower is a nested union of $m$-towers for all $m>1$. For a surface tower of class at least two, we will refer to each layer of surfaces as a stage, with the first stage being the initial surface whose curves subsequent layers are glued to. 
\end{definition} 

Recall that the $m^{th}$ term of the lower central series of $G$ is generated by simple iterated commutators $[x_1, [x_2, [x_1,..., x_m]]]$ as detailed in Section 5.3 of \cite{mks}. When $G$ is the fundamental group of a manifold $M$ we can realize a commutator in $G$ geometrically as the boundary of a surface of some genus continuously mapped into $M$ (though not necessarily embedded). Therefore, if we have an iterated commutator, we can realize it geometrically as the continuous image of a $2$-complex as above where new layers of surfaces are added on carefully. This idea motivates the following definition.

\begin{definition}A half-surface tower of order $m$ is inductively defined to be obtained from a surface $\Sigma$ by attaching half-surface towers of order $(m-1)$ to a half-symplectic basis $\{a_i\}$ of $\Sigma$. 
\end{definition}

As noted in Lemma 2.3 of \cite{freedmanteichner}, any map of an $m$-surface tower is also represented by a map of an order $m$ half-surface tower, therefore, we will mainly concern ourselves with these representatives.

\end{definition}
\begin{figure}[ht]
\centering
\includegraphics[scale=.2]{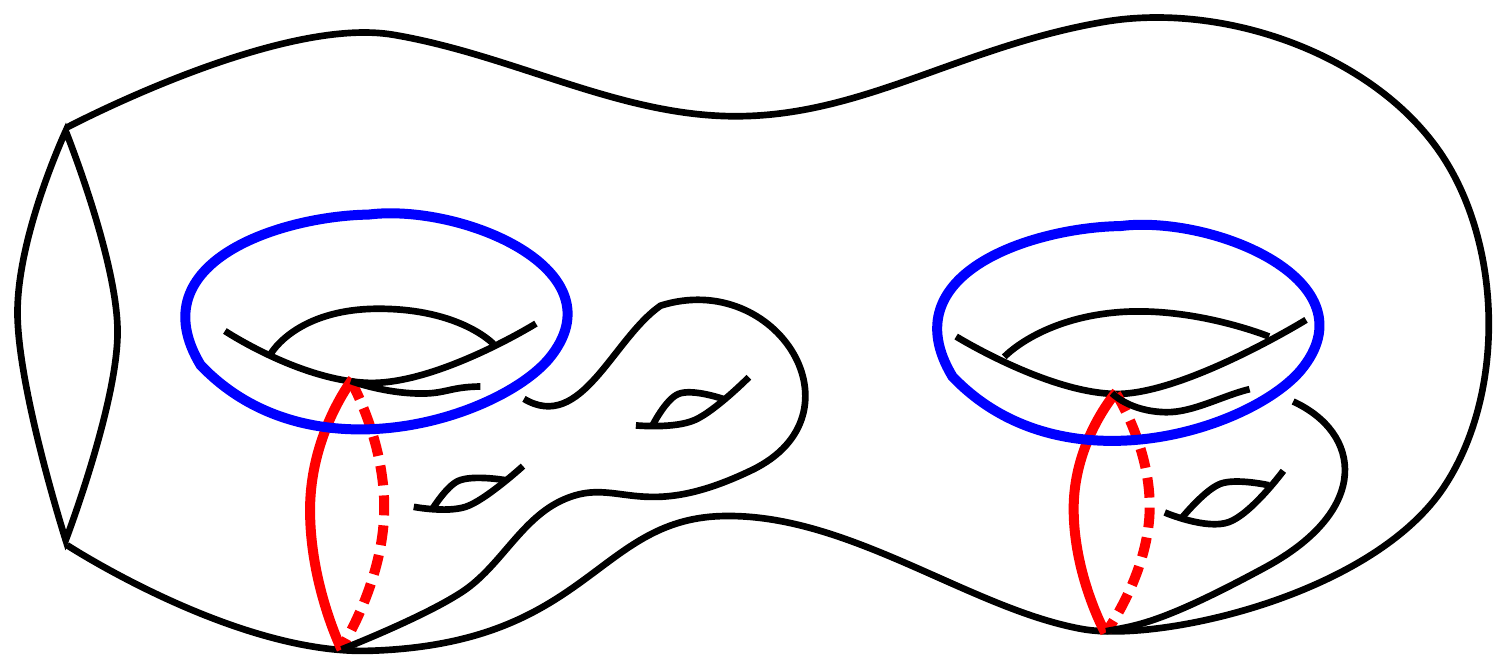}
 \caption{An order 3 half-surface tower.}
\end{figure}

Recall that Theorem \ref{thm:stallings} provided a sufficient condition for all lower central series quotients of a group to be isomorphic. The Dwyer subgroup leads to a refinement of this theorem by Dwyer in \cite{dwyer}.

\begin{thm}[Dwyer's Integral Theorem \cite{dwyer}] \label{thm:dwyerintegral} Let $\varphi: A \rightarrow B$ be a homomorphism that induces an isomorphism on $\text{H}_1( - ; \mathbb{Z} )$. Then for any positive integer $m$ the following are equivalent:

\begin{enumerate}
\item $\varphi$ induces an isomorphism $A/A_m \cong B/B_m$
\item $\varphi$ induces an epimorphism from $\text{H}_2(A;\mathbb{Z})/\Phi_m(A)$ to $\text{H}_2(B;\mathbb{Z})/\Phi_m(B)$
\item $\varphi$ induces an isomorphism from $\text{H}_2(A;\mathbb{Z})/\Phi_m(A)$ to  $\text{H}_2(B;\mathbb{Z})/\Phi_m(B)$ and a monomorphism from $\text{H}_2(A;\mathbb{Z})/\Phi_{m+1}(A)$ to  $\text{H}_2(B;\mathbb{Z})/\Phi_{m+1}(B)$
\end{enumerate}
\end{thm}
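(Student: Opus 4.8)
The plan is to reduce the entire statement to the Stallings five-term exact sequence (Theorem \ref{thm:stallingsseq}) together with naturality and a diagram chase. The first step is to extract from that sequence, for any group \(G\) and any \(m \geq 2\), the natural short exact sequence
\[
0 \to H_2(G)/\Phi_m(G) \to H_2(G/G_m) \to G_m/G_{m+1} \to 0.
\]
To obtain this I would specialize Theorem \ref{thm:stallingsseq} to \(N = G_m\) and observe that, because \(G_m \subseteq G_2 = [G,G]\) for \(m \geq 2\), the map \(H_1(G) \to H_1(G/G_m)\) is an isomorphism (both sides are the abelianization \(G^{\mathrm{ab}}\)). Exactness then forces the connecting map \(G_m/G_{m+1} \to H_1(G)\) to vanish and the map \(H_2(G/G_m) \to G_m/G_{m+1}\) to be surjective, while by definition \(\Phi_m(G)\) is exactly the kernel of \(H_2(G) \to H_2(G/G_m)\), so the image of \(H_2(G)\) is \(H_2(G)/\Phi_m(G)\). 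This is the structural identity that lets me trade statements about \(H_2\) modulo Dwyer subgroups for statements about the graded pieces \(G_m/G_{m+1}\) of the lower central series.

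Next I would assemble the functorial machinery. Since \(\varphi\) sends \(A_m\) into \(B_m\), it induces a map of the short exact sequences above, giving for each \(m\) a commutative ladder whose columns are \(\varphi_*\colon H_2(A)/\Phi_m(A) \to H_2(B)/\Phi_m(B)\), then \(H_2(A/A_m) \to H_2(B/B_m)\), and finally \(A_m/A_{m+1} \to B_m/B_{m+1}\). Two further observations feed the chase: first, the quotient maps \(G/G_{m+1} \to G/G_m\) show \(\Phi_{m+1}(G) \subseteq \Phi_m(G)\), so there are natural surjections \(H_2(G)/\Phi_{m+1}(G) \twoheadrightarrow H_2(G)/\Phi_m(G)\) and the epimorphism conditions in (2) are therefore nested; second, the central extension
\[
1 \to A_m/A_{m+1} \to A/A_{m+1} \to A/A_m \to 1
\]
and its image under \(\varphi\) let me pass between consecutive nilpotent quotients by the five lemma. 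The key local computation is that, once \(A/A_m \cong B/B_m\) is known, the middle column of the ladder is an isomorphism, so the snake lemma shows \(\varphi_*\) on \(H_2/\Phi_m\) is automatically injective and is surjective exactly when \(A_m/A_{m+1} \to B_m/B_{m+1}\) is an isomorphism, which in turn is equivalent to \(A/A_{m+1} \cong B/B_{m+1}\).

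With these pieces in hand I would prove the equivalences by induction on \(m\), running the implications around the cycle (1) \(\Rightarrow\) (3) \(\Rightarrow\) (2) \(\Rightarrow\) (1). The base case is anchored at \(m = 2\), where \(A/A_2 \cong B/B_2\) is just the hypothesis that \(\varphi\) is an isomorphism on \(H_1\). The inductive step uses the local computation above to convert an epimorphism statement on \(H_2\) modulo a Dwyer subgroup into an isomorphism on the relevant graded piece, and then the central extension to upgrade this to an isomorphism of the next nilpotent quotient; the implication \((3)\Rightarrow(2)\) is immediate from the nesting of the \(\Phi\)-quotients, and the monomorphism clause one level up in (3) is precisely the "boundary" datum that the snake lemma supplies.

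The main obstacle I anticipate is bookkeeping the indices consistently through the induction: matching the two different levels \(m\) and \(m+1\) appearing in condition (3) against the levels at which the snake lemma produces injectivity versus surjectivity, and aligning the base case so that every equivalence closes up without an off-by-one slip. The other point requiring care, though it is standard, is the naturality of the Stallings sequence — I must check that the short exact sequence and the associated five lemma and snake lemma diagrams are genuinely functorial in \(G\), so that \(\varphi\) induces compatible maps of all of them at once. Once the short exact sequence and the central extension are in place, everything else is formal homological algebra.
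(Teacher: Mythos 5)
The paper does not actually prove this theorem---it is quoted from Dwyer's paper---so the only comparison available is with the standard argument, which is indeed the one you outline: the Stallings five-term sequence, naturality, the snake and five lemmas, and an induction up the nilpotent quotients. Your architecture is the right one and, correctly indexed, it does prove the theorem.

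The genuine problem is the off-by-one in the Dwyer filtration, which you flagged as a risk and which does derail the argument as written. Your short exact sequence
\[
0 \to H_2(G)/\Phi_m(G) \to H_2(G/G_m) \to G_m/G_{m+1} \to 0
\]
presupposes $\Phi_m(G)=\ker\bigl(H_2(G)\to H_2(G/G_m)\bigr)$. Under that convention the theorem as stated is false: take $\varphi\colon F_2\to\mathbb{Z}^2$ the abelianization and $m=2$. Then (1) holds (both quotients are $\mathbb{Z}^2$), but $H_2(F_2)=0$ while $H_2(\mathbb{Z}^2)/\ker\bigl(H_2(\mathbb{Z}^2)\to H_2(\mathbb{Z}^2)\bigr)\cong\mathbb{Z}$, so (2) fails. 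Structurally, your local computation shows that, given $A/A_m\cong B/B_m$, surjectivity of $H_2(A)/\Phi_m(A)\to H_2(B)/\Phi_m(B)$ is equivalent to $A/A_{m+1}\cong B/B_{m+1}$; chaining this from the base case $m=2$ makes your induction establish the equivalence of (2) at level $m$ with (1) at level $m+1$, one step off from the theorem. The fix is to use the convention forced by the rest of the paper (where $\Phi_2(G)=H_2(G)$ is asserted), namely $\Phi_m(G)=\ker\bigl(H_2(G)\to H_2(G/G_{m-1})\bigr)$, and correspondingly run Stallings with $N=G_{m-1}$ to get
\[
0 \to H_2(G)/\Phi_m(G) \to H_2(G/G_{m-1}) \to G_{m-1}/G_m \to 0 .
\]
With this shift your snake-lemma step says: given $A/A_{m-1}\cong B/B_{m-1}$, the map on $H_2/\Phi_m$ is injective and is surjective iff $A/A_m\cong B/B_m$. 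Then $(2)\Rightarrow(1)$ follows by your nested-epimorphism induction from the base case $A/A_2\cong B/B_2$ (the $H_1$ hypothesis); $(1)\Rightarrow(3)$ follows because (1) at level $m$ gives surjectivity on $H_2/\Phi_m$ and, by the same computation one level up, injectivity on $H_2/\Phi_{m+1}$; and $(3)\Rightarrow(2)$ is trivial. Everything else in your outline---naturality of the Stallings sequence, the vanishing of the connecting map for $m\geq 2$, the central extension plus five lemma, the nesting $\Phi_{m+1}\subseteq\Phi_m$---is correct. (The same off-by-one appears in the paper's prose definition of $\Phi_n$, so you inherited it honestly, but the theorem is only true under the shifted convention.)
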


In Section \ref{section:background}, we saw Milnor's invariants gave us invariants for a link $L \subset S^3$ which detected exactly when the nilpotent quotients of $\pi_1(S^3 \setminus \nu(L), *)$ were isomorphic to the nilpotent quotients of a free group with appropriate rank. Now, we can use Theorem \ref{thm:dwyerintegral} to construct a concordance invariant for a knot $K \subset M$ where $M$ is a closed, oriented $3$-manifold using similar principles.

\begin{lem}\label{computeinhomology} Let $M$ be a $3$-manifold and $G$ be its fundamental group. Then,
$$\frac{\text{H}_2(M)}{\Phi_m(M)} \cong \frac{\text{H}_2(G)}{\Phi_m(G)}$$ for all $m > 1$.

\end{lem}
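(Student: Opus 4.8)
The plan is to use the fact that the map $M \to K(G,1)$ classifying the fundamental group induces an isomorphism on $H_1$ and that its behavior on $H_2$ is controlled well enough to compare the two Dwyer quotients. Let $\phi \colon M \to K(G,1)$ be the canonical map inducing the identity on $\pi_1$; it is a $2$-connected map in the sense that it induces an isomorphism on $\pi_1$ and hence on $H_1$. I would first observe that $\phi$ induces a map $\phi_* \colon H_2(M) \to H_2(G)$, and the content of the lemma is that this map descends to an isomorphism on the quotients $H_2(-)/\Phi_n(-)$. The key structural reason is that both $\Phi_n(M)$ and $\Phi_n(G) = \Phi_n(K(G,1))$ are defined identically: as the set of classes represented by half-$(n)$-surfaces, where ``lying in $\pi_1(-)_n$'' is the only condition, and since $\phi$ is an isomorphism on $\pi_1$, a symplectic basis curve lands in $\pi_1(M)_n$ if and only if its image lands in $\pi_1(G)_n = G_n$.

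The main steps I would carry out are as follows. First, I would show $\phi_*$ carries $\Phi_n(M)$ into $\Phi_n(G)$: given a half-$(n)$-surface $f \colon \Sigma \to M$ representing a class in $\Phi_n(M)$, the composite $\phi \circ f \colon \Sigma \to K(G,1)$ is a half-$(n)$-surface representing $\phi_*$ of that class, because $\phi_*$ on $\pi_1$ sends $f_*([a_i]) \in \pi_1(M)_n$ into $G_n$. Hence $\phi_*$ descends to a well-defined map $\overline{\phi_*} \colon H_2(M)/\Phi_n(M) \to H_2(G)/\Phi_n(G)$. Second, I would establish that $\overline{\phi_*}$ is surjective. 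Here I would appeal to the Stallings five-term exact sequence of Theorem \ref{thm:stallingsseq}: for both $M$ and $K(G,1)$ the tail is determined entirely by $\pi_1 = G$, so the images of $H_2(M) \to G_n/G_{n+1}$ and $H_2(G) \to G_n/G_{n+1}$ agree, and the kernels of these maps are precisely the Dwyer subgroups (this is the standard identification of $\Phi_n$ with the kernel of $H_2(G) \to H_2(G/G_n)$). Since $\phi_*$ is compatible with these sequences, the quotients by the kernels map isomorphically onto the common image in $G_n/G_{n+1}$.

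The cleanest route, which I would likely adopt to avoid diagram-chasing subtleties, is to identify $\Phi_n$ with $\ker\big(H_2(-) \to H_2(-/G_n)\big)$ directly, using the Cochran--Harvey equivalence stated in the excerpt between the half-$(n)$-surface description and this kernel. Then $H_2(M)/\Phi_n(M)$ and $H_2(G)/\Phi_n(G)$ are both identified with the image of $H_2(-)$ in $H_2(G/G_n)$; since $\phi$ induces the identity on $\pi_1$ and hence on $G/G_n$, and the map $M \to K(G,1)$ factors the quotient $M \to K(G/G_n,1)$ through $K(G,1)$, the two images coincide and $\overline{\phi_*}$ is an isomorphism.

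I expect the main obstacle to be verifying injectivity of $\overline{\phi_*}$, that is, showing a class in $H_2(M)$ mapping into $\Phi_n(G)$ already lies in $\Phi_n(M)$. Surjectivity and well-definedness follow fairly formally from the half-$(n)$-surface definition, but injectivity requires knowing that a half-$(n)$-surface representation existing in $K(G,1)$ can be pulled back to one in $M$. This is where I would lean hardest on the characterization $\Phi_n(-) = \ker(H_2(-) \to H_2(-/G_n))$ together with functoriality of the five-term sequence: the vertical maps induced by $\phi$ are the identity on every term built from $\pi_1$, so the snake/diagram chase forces the kernels to correspond exactly. The subtle point to get right is that $\phi_*$ on $H_2$ need not be injective or surjective as a map of groups, yet it must induce an \emph{isomorphism} after passing to the Dwyer quotients, and this is precisely because the obstruction to such an isomorphism, as measured by Theorem \ref{thm:dwyerintegral}, vanishes when the two spaces share the same $\pi_1$ via $\phi$.
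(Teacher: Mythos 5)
Your overall strategy --- comparing the two Dwyer quotients through the classifying map $\phi\colon M\to K(G,1)$ --- is reasonable, and your first step (that $\phi_*$ carries $\Phi_n(M)$ into $\Phi_n(G)$ by composing a half-$(n)$-surface with $\phi$, so the map descends to $\overline{\phi_*}$) is correct. But there is a genuine gap in the surjectivity step. You identify both quotients with images in $H_2(G/G_n)$ and assert that ``the two images coincide'' because $M\to K(G/G_n,1)$ factors through $K(G,1)$. That factorization only gives the containment $\mathrm{im}\bigl(H_2(M)\to H_2(G/G_n)\bigr)\subseteq \mathrm{im}\bigl(H_2(G)\to H_2(G/G_n)\bigr)$; for equality you need $H_2(M)\to H_2(G)$ to be surjective. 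You not only omit this but explicitly disclaim it (``$\phi_*$ on $H_2$ need not be injective or surjective''). In fact it \emph{is} always surjective: since $K(G,1)$ is obtained from $M$ by attaching cells of dimension $\geq 3$, Hopf's theorem gives the exact sequence $\pi_2(M)\to H_2(M)\to H_2(G)\to 0$. Without this input your argument establishes at best injectivity of $\overline{\phi_*}$ (granting your kernel characterization), not that it is onto.

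A second, smaller issue: your ``cleanest route'' rests on the identification $\Phi_n(X)=\ker\bigl(H_2(X)\to H_2(\pi_1(X)/\pi_1(X)_n)\bigr)$ for an arbitrary space $X$. The paper records this only for $X=K(G,1)$; for general $X$ it is a nontrivial theorem of Cochran--Harvey, and invoking the space-level form of Dwyer's theorem to prove this lemma is close to circular, since the lemma is precisely what allows one to pass between the space-level and group-level statements elsewhere in the paper. The paper's own proof sidesteps both problems: it uses the Hopf exact sequence above, observes that $\mathrm{im}\bigl(\pi_2(M)\to H_2(M)\bigr)\subseteq\Phi_n(M)$ because a $2$-sphere is a half-$(n)$-surface of arbitrary class (the symplectic-basis condition is vacuous), checks that $\Phi_n(M)$ maps onto $\Phi_n(G)$, and concludes by the first and third isomorphism theorems. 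Your argument can be repaired by inserting Hopf's theorem at the surjectivity step, but as written that key point is missing.
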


\begin{proof}
Recall that we can construct a $K(G,1)$ by attaching cells of dimension $3$ and higher to $M$. Furthermore, the homology groups of $G$ are the homology groups of this $K(G,1)$, thus, we have the following exact sequence

$$\pi_2(M) \rightarrow H_2(M) \rightarrow H_2(G) \rightarrow 0.$$

Now, consider the Dwyer subgroup $\Phi_m(M) \subset H_2(M)$ and notice that every element of $\pi_2(M)$ maps to an element inside $\Phi_m(M)$ since a $2$-sphere is a half-surface tower of order $m$ of arbitrary large class. Therefore, we have the following commutative diagram with exact rows and columns
\begin{center}
\begin{tikzpicture}
\matrix(a)[matrix of math nodes,
row sep=3em, column sep=3em,
text height=1.5ex, text depth=0.25ex]
{& \pi_2(M ) & & & \\
1  &  \Phi_m(M)  & H_2(M) & \frac{H_2(M)}{\Phi_m(M )} & 1 \\
1 & \Phi_m(G) & H_2(G) & \frac{H_2(G)}{\Phi_m(G)} & 1 \\
& 1 & 1 & &  \\};
\path[->](a-1-2) edge (a-2-2);
\path[->](a-1-2) edge (a-2-3);
\path[->](a-2-1) edge (a-2-2);
\path[->](a-2-2) edge (a-2-3);
\path[->](a-2-3) edge (a-2-4);
\path[->](a-2-4) edge (a-2-5);
\path[->](a-3-1) edge (a-3-2);
\path[->](a-3-2) edge (a-3-3);
\path[->](a-3-3) edge (a-3-4);
\path[->](a-3-4) edge (a-3-5);
\path[->](a-2-2) edge (a-3-2);
\path[->](a-3-2) edge (a-4-2);
\path[->](a-2-3) edge (a-3-3);
\path[->](a-3-3) edge (a-4-3);

\end{tikzpicture}
\end{center}

and thus 
$$\frac{H_2(G)}{\Phi_m(G)} \cong \frac{H_2(M) / \text{Im}(\pi_2(M))}{\Phi_m(M )/ \text{Im}(\pi_2(M))}\cong \frac{H_2(M)}{\Phi_m(M)}$$
by the first and third isomorphism theorems. \end{proof}

We can use this lemma to prove Proposition \ref{prop:hcob}.

\begin{proof}[Proof of Proposition \ref{prop:hcob}] Let $C$ be the smooth concordance from $K_1$ to $K_2$ inside $M \times I$, $H=\pi_1((M \times I)\setminus C, *)$, and $\iota_i: M \setminus \nu(K_i) \rightarrow M \times I \setminus \nu(C)$ be inclusion maps. It is a straightforward exercise using the Mayer-Vietoris sequence, naturality, and the 5-lemma to show that each $\iota_i$ induces isomorphisms on homology which, by the proof of Lemma \ref{computeinhomology} and naturality, extend to isomorphisms on the homology of their fundamental groups. Now by applying \ref{thm:dwyerintegral} we see $G(K_1)/G(K_1)_m \cong G(K_2)/G(K_2)_m$.
\end{proof}

Now, given the utility of the Dwyer subgroups in transforming computations in group homology into computations in the homology of a $3$-manifold with appropriate fundamental group, we define a knot invariant. 

\begin{definition}
Let $M$ be a oriented, closed $3$-manifold and $\gamma$ be a fixed smooth, embedded curve inside $M$ such that, for any  knot $K$ homologous to $\gamma$, there is a homomorphism $\psi$ from $G(\gamma)$ to $G(K)$ inducing an isomorphism from $H_1(M \setminus \nu(\gamma))$ to $H_1(M \setminus \nu(K))$.  Then the Dwyer number of such a $K$ relative to $\gamma$ is defined by first considering a fixed $\psi$:
$$D(K, \gamma, \psi) :=  \max\bigg\{ m \bigg\lvert\frac{\text{H}_2(M \setminus \nu(K))}{\Phi_m(M \setminus \nu(K))} \cong \frac{\text{H}_2(M \setminus \nu(\gamma))}{\Phi_m(M \setminus \nu(\gamma))} \text{ induced by $\psi$} \bigg\}$$

then maximize over all such $\psi$:
$$D(K, \gamma) := \max\big\{ D(K, \gamma, \psi) \mid \psi \in \text{Hom}(G(\gamma), G(K)) \text{ where } \psi_* \text{ is an isomorphism on } H_1 \big\}$$

In the case where this set has no maximum, define $D(K, \gamma):=\infty$. 

\end{definition}

The fact that this invariant is never the maximum of the empty set (and therefore vacuous) is not obvious. Recall that $$\Phi_1(\pi_1(X)) = \{x \in H_2(X) \mid x \text{ is a } G_2\text{-surface} \}=H_2(X)$$ by definition, and thus 

$$\frac{\text{H}_2(M \setminus \nu(K))}{\Phi_1(M \setminus \nu(K))} \cong 0 \cong \frac{\text{H}_2(M \setminus \nu(\gamma))}{\Phi_1(M \setminus \nu(\gamma))} $$

From this we see the invariant as at least 1. We can also reformulate $D(K, \gamma)$ in terms of the fundamental group in the following way.

\begin{prop}\label{prop:equivdef}The Dwyer number can be equivalently formulated as $$D(K, \gamma) =\max \bigg\{  m \hspace{2px} \bigg\lvert \hspace{2px} \frac{G(K)}{G(K)_m} \cong \frac{G(\gamma)}{G(\gamma)_m} \text{induced by $\psi$} \bigg\}$$
then maximize over all such $\psi$:
$$D(K, \gamma) := \max\big\{ D(K, \gamma, \psi) \mid \psi \in \text{Hom}(G(\gamma), G(K)) \text{ where } \psi_* \text{ is an isomorphism on } H_1 \big\}$$

In the case where this set has no maximum, define $D(K, \gamma):=\infty$. 
\end{prop}
\begin{proof} By Theorem \ref{thm:dwyerintegral}, a homomorphism $\psi: G(\gamma)\rightarrow G(K)$ inducing an isomorphism on $H_1$ induces an isomorphism on  $\frac{G(K)}{G(K)_m} \cong \frac{G(\gamma)}{G(\gamma)_m}$ if and only if it induces an isomorphism on $\frac{\text{H}_2(G(K))}{\Phi_m(G(K))} \cong \frac{\text{H}_2(G(\gamma))}{\Phi_m(G(\gamma))}$.
We see that Lemma \ref{computeinhomology} implies $\frac{\text{H}_2(G(K))}{\Phi_m(G(K))} \cong \frac{\text{H}_2(G(\gamma))}{\Phi_m(G(\gamma))}$ if and only if  $\frac{\text{H}_2(M \setminus \nu(K))}{\Phi_m(M \setminus \nu(K))} \cong \frac{\text{H}_2(M \setminus \nu(\gamma))}{\Phi_m(M \setminus \nu(\gamma))}$. 
\end{proof}

The utility of the surface tower approach is that it allows us to do computations involving the homology of the fundamental group of a knot complement using iterated surfaces and to geometrically realize these computations. Note that one could generalize this invariant quickly in the following ways:
\begin{enumerate}
\item Define the Dwyer number of a link in $M$,
\item Define the Dwyer number with rational coefficients using the rational lower central series defined by Cochran and Harvey in  \cite{cochranharvey}, which may perhaps be useful in the case of $M$ whose fundamental group is not torsion-free. 
\end{enumerate} These questions are beyond the scope of this paper, but it would be interesting to see computations of the Dwyer number using distinct tools from this work.  Note that by arguments in Section \ref{section:background}, it follows immediately that the Dwyer number of a link $L \subset S^3$ is exactly the weight of the first non-vanishing Milnor's invariant of $L$.

\concinvt

\begin{proof} Let $\gamma$ be a fixed smooth, embedded curve inside $M$ such that, for any  knot $K$ homologous to $\gamma$, there is a homomorphism from $G(\gamma)$ to $G(K)$ inducing an isomorphism from $H_1(M \setminus \nu(\gamma))$ to $H_1(M \setminus \nu(K))$. 

If $J$ is another oriented knot in $M$ in the same concordance class as $K$ with fundamental group $G(J)$, it follows immediately that $K$ and $J$ are homologous in $M$ and therefore $D(J, \gamma)$ is well-defined. By Proposition \ref{prop:hcob}, $G(K)/G(K)_m \cong G(J)/ G(J)_m$ for all $m$ since $K$ and $J$ are concordant. The result follows from the proof of Proposition \ref{prop:equivdef}.

\end{proof}

\section{A generalization of the first non-vanishing Milnor's invariant} \label{section:dwyernum}

In the case where $K$ is null-homologous and $M \cong \inlineMl$, this invariant is straightforward to work with and has direct connections to Milnor's invariants for links in $S^3$. In this paper, we will focus on this case as it has applications to the study of link concordance in $S^3$ as we will mention in Section \ref{section:dwyernum}; we will address other cases in future work. 

\begin{lem}Let $K$ be a null-homologous knot in $\inlineMl$, and let $U$ be the unknot in $\inlineMl$. Then, $U$ can be used as $\gamma$ in the definition of $D(K, \gamma)$. 

\end{lem}

\begin{proof}
A simple Seifert-van Kampen argument shows $G(U)$ is a free group $F$ on $l+1$ generators. Furthermore, a similar argument shows that $G(K)$ is normally generated by a meridian of $K$ and one meridian for each $S^1 \times S^2$ factor of $\inlineMl$. It follows immediately that the map $\varphi: G(U) \rightarrow G(K)$ sending generators of $\Gamma$ to the aformentioned meridians induces an isomorphism on first homology.
\end{proof}

As a result, in the case when $K$ is null-homologous in $\inlineMl$ we will drop $\gamma$ from the notation and simply write $D(K)$. 

\begin{cor}
Let $K \subset \inlineMl$ be a null-homologous knot. Then,

$$D(K) = \max\{\hspace{3px} q\hspace{7px} |\hspace{7px} \frac{\text{H}_2(\inlineMl \setminus K)}{\Phi_q(\inlineMl \setminus K)} = 0\hspace{3px}\}$$
\end{cor}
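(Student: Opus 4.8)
The plan is to show that the reference complement contributes nothing to the defining condition of $D(K,\gamma)$ once $\gamma$ is taken to be an unknot; concretely, to prove that
$H_2(\inlineMl \setminus \nu(\gamma))/\Phi_q(\inlineMl \setminus \nu(\gamma)) = 0$ for every $q$. Granting this, the isomorphism condition
$H_2(\inlineMl\setminus\nu(K))/\Phi_q(\inlineMl\setminus\nu(K)) \cong H_2(\inlineMl\setminus\nu(\gamma))/\Phi_q(\inlineMl\setminus\nu(\gamma))$
in the definition of the Dwyer number collapses to $H_2(\inlineMl\setminus\nu(K))/\Phi_q(\inlineMl\setminus\nu(K)) \cong 0$, and taking the maximal such $q$ yields exactly the stated formula. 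Thus the whole content is the computation for the unknot complement.

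First I would pin down the homotopy type of $\inlineMl \setminus \nu(\gamma)$. Since $\gamma$ is unknotted it bounds a disk inside a ball $B^3 \subset \inlineMl$, so I can decompose the complement as $(\inlineMl \setminus \mathrm{int}\,B^3) \cup_{S^2} (B^3\setminus\nu(\gamma))$. As $S^2$ is simply connected, van Kampen gives $\pi_1(\inlineMl\setminus\nu(\gamma)) \cong F_\ell * \Z \cong F_{\ell+1}$, the $F_\ell$ coming from the punctured $\inlineMl$ and the extra $\Z$ generated by a meridian of $\gamma$. Because a free group has cohomological dimension one, $H_2(F_{\ell+1}) = 0$, hence $\Phi_q(F_{\ell+1}) = \ker\big(H_2(F_{\ell+1})\to H_2(F_{\ell+1}/(F_{\ell+1})_q)\big) = 0$ and $H_2(F_{\ell+1})/\Phi_q(F_{\ell+1}) = 0$ for all $q$. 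Applying Lemma \ref{computeinhomology} to the compact $3$-manifold-with-boundary $\inlineMl\setminus\nu(\gamma)$ (whose proof applies verbatim in the bounded case) transports this to $H_2(\inlineMl\setminus\nu(\gamma))/\Phi_q(\inlineMl\setminus\nu(\gamma)) = 0$, as desired.

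A second, more geometric route avoids computing $\pi_1$ and makes the vanishing transparent. A Mayer--Vietoris argument on the decomposition above shows $H_2(\inlineMl\setminus\nu(\gamma)) \cong \Z^\ell$, generated by the $\ell$ essential $2$-spheres $\{pt\}\times S^2$ coming from the $S^1\times S^2$ summands, which may be isotoped off the ball containing $\gamma$. Each of these classes lies in the image of $\pi_2$, and a $2$-sphere is a half-$(q)$-surface of arbitrarily large class since its symplectic basis is empty and the condition in Definition \ref{def:cc} is vacuous. Hence every generator lies in $\Phi_q$, so $\Phi_q(\inlineMl\setminus\nu(\gamma)) = H_2(\inlineMl\setminus\nu(\gamma))$ and the quotient vanishes for all $q \geq 2$; this is the same mechanism already exploited in the proof of Lemma \ref{computeinhomology}.

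The only remaining point, and the one requiring care rather than difficulty, is confirming that the unknot is an admissible choice of $\gamma$, i.e. that for every null-homologous $K$ there is a homomorphism $\pi_1(\inlineMl\setminus\nu(\gamma))\to\pi_1(\inlineMl\setminus\nu(K))$ inducing an isomorphism on $H_1$. This is immediate from the freeness of $\pi_1(\inlineMl\setminus\nu(\gamma))$: send the $\ell$ free generators to the corresponding loops of the $S^1\times S^2$ summands and the meridian generator to a meridian of $K$, and note both groups abelianize to $\Z^{\ell+1}$, so the induced map on $H_1$ is an isomorphism. With admissibility established and the vanishing of the reference quotient in hand, the substitution into the definition of $D(K,\gamma)$ gives the formula directly.
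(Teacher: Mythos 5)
Your proposal is correct and follows essentially the same route as the paper: identify $\pi_1(\inlineMl \setminus \nu(\gamma))$ as free of rank $\ell+1$ via Seifert--van Kampen, check admissibility by sending the free generators to a meridian of $K$ and the core loops of the summands, and conclude from $H_2(F_{\ell+1})=0$ that the reference quotient vanishes for all $q$. You make explicit two points the paper leaves implicit --- the appeal to Lemma~\ref{computeinhomology} to pass from group homology back to the homology of the complement, and the alternative geometric verification that the generating spheres are half $(q)$-surfaces of arbitrary class --- and both are sound.
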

\begin{proof}
Since $H_2(F)=0$, the corollary follows. 
\end{proof}

Recall that we can view Milnor's invariants for links $L \subset S^3$ as detecting how deep the longitudes of $L$ lie in the lower central series of the link group. In a similar fashion, the Dwyer number for null-homologous knots in $\inlineMl$ detects how deep the $0$-framed longitude of $K \subset \inlineMl$ is in the lower central series of the knot group (and thus how ``close" the group quotients of the knot group are to the group quotients of the corresponding free group). Similarly, it also detects the length of the first non-zero Massey product in the cohomology of the complement of the knot $K$ in an analogous way to the celebrated theorem of Turaev \cite{turaev} and independently Porter \cite{porter} relating Milnor's invariants of links in $S^3$ to Massey products in the link complement. The case of $K \subset \inlineMl$ may serve as a model for the computation of $D(K, \gamma)$ in more general situations as we will explore in later work.

\begin{figure}
\centering
  \includegraphics[width=.2\linewidth]{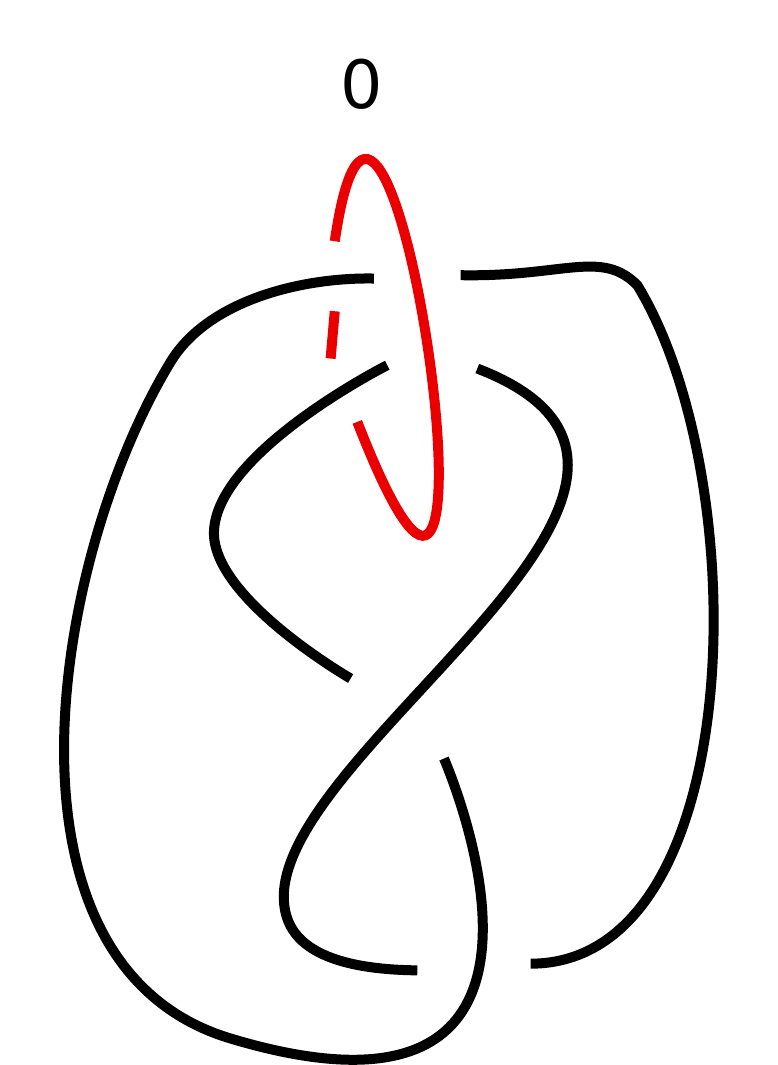}
  \caption{A null-homologous knot in $S^1 \times S^2$.}
  \label{fig:dwyerexample}
\end{figure}

To compute $D(K)$ we find the following lemmas useful.

\begin{lem} \label{prop:mv}
Let $K$ be a null-homologous knot in $\inlineMl$ and $\nu(K)$ be a tubular neighborhood of it. Then 

\begin{align*}
    \text{H}_0(\Ml \setminus \nu(K)) &= \mathbb{Z}        \\ 
     \text{H}_1(\Ml \setminus \nu(K)) &= \mathbb{Z}^{\ell+1} \\  
      \text{H}_2(\Ml \setminus \nu(K)) &= \mathbb{Z}^{\ell} \\
      \text{H}_n(\Ml \setminus \nu(K)) &= 0 \text{ for } n \geq 3
\end{align*}

 Moreover, $\text{H}_1(\inlineMl \setminus \nu(K)) = \langle \mu, d_1, ..., d_{\ell} \rangle$ where $\mu$ is a meridian of $K$ and $d_1, ..., d_l$ generate $\text{H}_1(\Ml)$.  $\text{H}_2(\Ml \setminus \nu(K)) = \mathbb{Z}^{\ell}$ is generated by surfaces consisting of the $S^2$ generators of $\text{H}_2(\inlineMl)$ with attached tubes missing $K$.
\end{lem}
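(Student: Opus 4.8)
The plan is to run the long exact sequence of the pair $(M, X)$, where $M \cong \inlineMl$ and $X = M \setminus \nu(K)$, after first computing the relative groups $H_*(M,X)$ by excision. By excision $H_*(M,X) \cong H_*(\nu(K), \partial\nu(K))$, and since $\nu(K) \cong S^1 \times D^2$ with $\partial\nu(K) \cong T^2$, Lefschetz duality gives $H_k(\nu(K), \partial\nu(K)) \cong H^{3-k}(S^1)$; hence $H_3(M,X) \cong H_2(M,X) \cong \mathbb{Z}$ and all other relative groups vanish. Feeding this, together with the standard values $H_0(M) = H_3(M) = \mathbb{Z}$ and $H_1(M) = H_2(M) = \mathbb{Z}^\ell$ for a connected sum of $\ell$ copies of $S^1 \times S^2$, into the long exact sequence reduces the whole computation to understanding two maps.

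First I would check that the map $H_3(M) \to H_3(M,X)$ carries the fundamental class $[M]$ to the generator of $H_3(M,X) \cong H_3(\nu(K), \partial\nu(K))$, i.e.\ is an isomorphism; this forces $H_3(X) = 0$ (equivalently, $X$ is a compact $3$-manifold with nonempty boundary, so $H_n(X) = 0$ for $n \geq 3$) and makes the connecting map out of $H_3(M,X)$ zero. The sequence then collapses to
$$0 \to H_2(X) \to H_2(M) \xrightarrow{\psi} H_2(M,X) \xrightarrow{\partial} H_1(X) \to H_1(M) \to 0,$$
so everything hinges on $\psi$ and $\partial$. The map $\partial$ sends the generator of $H_2(M,X)$, represented by a meridian disk, to its boundary, the meridian $\mu$ of $K$, which is exactly where the named generator of $H_1(X)$ will come from.

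The crux is the claim that $\psi = 0$, and this is precisely where the null-homologous hypothesis enters. Geometrically $\psi$ sends a class represented by a closed surface $\Sigma$ to its algebraic intersection number $\Sigma \cdot K$ times the generator of $H_2(M,X) \cong \mathbb{Z}$ (after making $\Sigma$ meet $\nu(K)$ in meridian disks); this intersection number is the value of the intersection pairing $H_2(M) \times H_1(M) \to \mathbb{Z}$ on $([\Sigma], [K])$, which vanishes because $[K] = 0$. Granting $\psi = 0$, exactness gives $H_2(X) \cong H_2(M) = \mathbb{Z}^\ell$ via inclusion, $\partial$ is injective, and we are left with the short exact sequence $0 \to \mathbb{Z} \xrightarrow{\partial} H_1(X) \to \mathbb{Z}^\ell \to 0$; since $\mathbb{Z}^\ell$ is free this splits, giving $H_1(X) \cong \mathbb{Z}^{\ell+1}$ generated by $\mu$ together with lifts $d_1, \dots, d_\ell$ of the generators of $H_1(M)$. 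For the final geometric statement I would realize the isomorphism $H_2(X) \cong H_2(M)$ concretely: each $S^2$ generator of $H_2(M)$ meets $K$ in points whose signs cancel (again because $[S^2]\cdot[K]=0$), so tubing along $K$ to remove these intersections in pairs produces a surface lying in $X$ and homologous in $M$ to the original $S^2$, which is the asserted generator.

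The main obstacle is pinning down $\psi$ as the intersection pairing with $[K]$ and using null-homology to kill it; once $\psi = 0$ is established the remaining exactness, the splitting, and the meridian/tubing descriptions of the generators are routine. A secondary point to verify cleanly is the connectedness of $X$ (so that $H_0(X) = \mathbb{Z}$), which holds because a knot does not separate a connected $3$-manifold.
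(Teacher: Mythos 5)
Your proposal is correct and is in substance the same computation as the paper's: the paper runs the Mayer--Vietoris sequence for the decomposition of $\#^{\ell}S^1\times S^2$ into $\nu(K)$ and its complement, which is formally equivalent to your long exact sequence of the pair $(M,X)$ plus excision, and in both arguments the null-homologous hypothesis enters at exactly the same point, namely the vanishing of the algebraic intersection of $K$ with the $2$-sphere generators of $H_2(M)$. Your identification of the map $H_2(M)\to H_2(M,X)\cong\mathbb{Z}$ with intersection against $[K]$ is a slightly cleaner packaging of the paper's linking-number computation in a surgery presentation, but the underlying argument is identical.
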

\begin{proof}
First note that any $K \subset \inlineMl$ can be described by an obvious surgery diagram. We can clearly find an $\ell+1$-component link $L = (K^{\prime}, U_1, ..., U_{\ell}) \subset S^3$ such that the $\ell$-component sublink $U = (U_1, ..., U_{\ell}) $ is the unlink and after performing $0$-surgery on $U$ the image of $K^{\prime}$ in the resulting $\inlineMl$ is $K$. Consider a Mayer-Vietoris sequence in reduced homology with neighborhoods $A = \nu_{\epsilon}(K)$ of radius $\epsilon >0$ for some $\epsilon \in \R$ and $B=\inlineMl \setminus \nu_{\epsilon/2}(K)$ of radius $\epsilon/2 >0$. Then $ A \cup B = \inlineMl$ and $A \cap B$ deformation retracts to $\partial\nu(K)$. Note that $\text{H}_1(\inlineMl) = \mathbb{Z}^{l}$ is generated by meridians $d_i$, $1 \leq i \leq \ell$ of unlink $U$. $\text{H}_2(\inlineMl) = \mathbb{Z}^{\ell}$ is generated by embedded 2-spheres $e_i$, $1 \leq i \leq \ell$ such that each sphere $e_i$ is the union of the disk $\Sigma_i$ bounded by $U_i$ in $S^3$ and a disk $D_i$ bounded by $U_i$ after $0$-surgery on $U$.

We will compute using cellular homology. There is a cellular decomposition of $\inlineMl$ induced by the surgery diagram: start with a cellular decomposition for $S^3 \setminus \nu(u)$, and then performing $0$-surgery on $U$ glues a $2$-cell to the longitude of $U_i$ and a $3$-cell to the resulting $S^2$ boundary component. Therefore we see that $\partial_*: \text{H}_3(\inlineMl) \rightarrow \text{H}_2(\partial\nu(K))$ is an isomorphism from the definition of the boundary map, so we have the following long exact sequence:
$$ 0 \rightarrow \tilde{\text{H}}_2(\inlineMl \setminus \nu(K)) \rightarrow \mathbb{Z}^l \rightarrow \mathbb{Z}^2 \rightarrow \mathbb{Z} \oplus \tilde{\text{H}}_1(\inlineMl \setminus \nu(K)) \rightarrow \mathbb{Z}^l \rightarrow 0 $$
Since the last term is a projective module, there is a splitting map $s: \mathbb{Z}^{\ell} \rightarrow \mathbb{Z} \oplus \tilde{\text{H}}_1(\inlineMl \setminus \nu(K))$ and thus by exactness $\mathbb{Z} \oplus \tilde{\text{H}}_1(\inlineMl \setminus \nu(K)) \cong \mathbb{Z}^l \oplus im(f: \mathbb{Z}^2 \rightarrow \mathbb{Z} \oplus \tilde{\text{H}}_1(\inlineMl \setminus \nu(K)))$ where $f$ sends meridian $\mu$ to $(0, -\jmath_*(\mu))$  and longitude $\lambda$ to $(\imath_*(\lambda),0)$ where $\jmath$ and $\imath$ are inclusion maps since $K$ is null-homologous. Hence $\mathbb{Z} \oplus \tilde{\text{H}}_1(\inlineMl \setminus \nu(K)) \cong \mathbb{Z}^l \oplus \mathbb{Z} \oplus \langle-\jmath_*(\mu)\rangle \cong \mathbb{Z}^{l+1} \oplus \langle-\jmath_*(\mu)\rangle$. 

Now we should determine the order $d$ of $-\jmath_*(\mu) \in \tilde{\text{H}}_1(\inlineMl \setminus \nu(K))$. If $d (-\jmath_*(\mu)) = 0$, then $\text{Ker }(\tilde{\text{H}}_1(\partial\nu(K)) \rightarrow \mathbb{Z} \oplus \tilde{\text{H}}_1(\inlineMl \setminus \nu(K))$, which is equal to $d\mathbb{Z}=\text{Im }(\tilde{\text{H}}_2(\inlineMl )\cong \mathbb{Z}^{\ell} \rightarrow H_1(\nu(K))\cong \mathbb{Z}^2)$.

Since $K$ is null-homologous, viewing $K^{\prime}$ as a 1-component sublink of $L$ as above gives $lk(K^{\prime}, U) = 0$ which implies that the algebraic intersection $ K \cdot \Sigma_i = 0$. Furthermore,  using the aforementioned surgery diagram  of $K \subset \inlineMl$, it is clear that the geometric intersection $K \cap D_i = 0$. Thus, $\partial\nu(K)$ intersects sphere $e_i=\Sigma_i \cup_{L_i} D_i$ in pairs of oppositely oriented meridians of $K$ (or does not intersect $e_i$ at all). This means that for each generating sphere $e_i$ of $H_2(\inlineMl)$, we can compute its image under $\partial_*$ in the Mayer Vietoris sequence by the boundary of meridional disks in $\nu(K)$ which, by the previous argument, must be algebraically 0. Therefore $\partial_*[e_i]=0$ for each $i$ and $\text{Im }(\mathbb{Z}^{\ell} \rightarrow \mathbb{Z}^2) = 0$, so $-\jmath_*(\mu)$ is infinite order. 

We have therefore shown $\mathbb{Z} \oplus \tilde{\text{H}}_1(\inlineMl \setminus \nu(K)) \cong \mathbb{Z}^{\ell+2}$ and thus $\tilde{\text{H}}_1(\inlineMl  \setminus \nu(K)) \cong \mathbb{Z}^{\ell+1}$. Moreover, by the previous argument about intersections we see that we can take as a generating set the union of each $e_i$ with $i_j$ tubes $T_{i_j}$, one for each oppositely oriented pair of intersections $p_{i_j}$ and $q_{i_j}$ between $K$ and $e_i$. Finally, by exactness we see that $\text{im }(\tilde{\text{H}}_2(\inlineMl \setminus \nu(K)) \rightarrow \mathbb{Z}^{\ell}) = \mathbb{Z}^{\ell}$ and therefore $\tilde{\text{H}}_2(\inlineMl \setminus \nu(K)) \cong \mathbb{Z}^{\ell}$.
\end{proof}

Recall from Section \ref{section:background} that, in order to define Milnor's invariants, we exploited a specific presentation of the nilpotent quotients of the fundamental group of the link complement  in $S^3$. We obtain a similar presentation in this context.

\begin{lem} \label{gppresent} Let $K \subset \inlineMl$ be a null-homologous knot with longitude $l$. There is a homomorphism $\varphi: F \rightarrow G(K,\inlineMl)$ where $F$ is the free group on $\ell+1$ letters and a word $R_m(l) \in F$ with $\varphi(R_m(l)) \cong l \text{ mod } G(K)_m$ such that $G(K) / G(K)_m$ can be presented by:
$$ \langle x, a_1, ..., a_{\ell} \mid [x, R_m(l)], F_m \rangle$$ 
where $F_m$ are the weight $m$ commutator relations and each $a_i$ generates the $i^{\text{th}}$ $\Z$ summand in $H_1(\inlineMl)$.
\end{lem}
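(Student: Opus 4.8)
The plan is to reduce the computation to Milnor's presentation theorem (Theorem~\ref{thm:presentation}) applied to an auxiliary link in $S^3$, using the surgery description of $\inlineMl$ already set up in Lemma~\ref{prop:mv}. First I would realize $\inlineMl$ as $0$-surgery on the $\ell$-component unlink $L' = (L_1, \dots, L_\ell) \subset S^3$, so that the exterior $\inlineMl \setminus \nu(K)$ is obtained from the exterior of the link $L = (K, L_1, \dots, L_\ell)$ by gluing in the $\ell$ surgery solid tori. A Seifert--van Kampen argument then identifies $G$ with $P/\langle\langle \lambda_1, \dots, \lambda_\ell \rangle\rangle$, where $P = \pi_1(S^3 \setminus \nu(L))$ and $\lambda_i$ is the $0$-framed longitude of $L_i$, each of which bounds the meridian disk of its surgery torus. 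This simultaneously exhibits the meridian $x$ of $K$ and the meridians $a_i$ of the $L_i$ (the generators of the $\Z$-summands of $H_1(\inlineMl)$) as a normally generating set, so I take $F$ to be the free group on these $\ell+1$ symbols and $\varphi\colon F \to G$ the induced homomorphism.

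Next I would bring in the null-homologous hypothesis. Regarding $K$ as a sublink of $L$, the condition $[K]=0$ forces $\mathrm{lk}(K,L_i)=0$, and since the $L_i$ form an unlink every pairwise linking number in $L$ vanishes; by Remark~\ref{homologyandlinking} this places each $\lambda_i$ and the longitude $l$ of $K$ in the second term of the lower central series and shows that $\varphi$ induces an isomorphism on $H_1$ compatible with the computation in Lemma~\ref{prop:mv}. I would then let $R_m(l) \in F$ be a word representing the image of $l$ in $G/G_m$, which exists precisely because the meridians normally generate.

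The heart of the computation is to push the presentation through the surgery quotient. Milnor's theorem gives
\[
P/P_m \cong \big\langle\, x, a_1, \dots, a_\ell \ \big|\ [x, R_m(l)],\ [a_i, R_m(\lambda_i)]\ (1 \le i \le \ell),\ F_m \,\big\rangle,
\]
and because the lower central series of a quotient satisfies $(P/N)_m = P_m N / N$, the group $G/G_m$ is obtained from this by adjoining the images of the surgery relations $R_m(\lambda_i)=1$. Imposing $R_m(\lambda_i)=1$ collapses each $[a_i, R_m(\lambda_i)]$ to a trivial relation, which formally leaves the asserted presentation $\langle x, a_1, \dots, a_\ell \mid [x, R_m(l)], F_m \rangle$.

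The step I expect to be the main obstacle is exactly the control of these surgery relations $R_m(\lambda_i)=1$: one must verify that adjoining them does nothing beyond deleting the redundant relations $[a_i, R_m(\lambda_i)]$, equivalently that the meridians $a_i$ survive as genuine free generators of the nilpotent quotient. This is where the geometry of $K$ relative to the spheres $e_i$ of Lemma~\ref{prop:mv} must be used, and where the null-homologous hypothesis has to do real work. I would attack it by tracking the $\lambda_i$ through the Magnus expansion and checking, via Stallings' theorem (Theorem~\ref{thm:stallings}) applied to $\varphi$, that the candidate presentation has the correct quotient $G/G_m$ at every stage $m$.
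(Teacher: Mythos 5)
Your route is the same as the paper's: realize $\inlineMl \setminus \nu(K)$ by $0$-surgery on the unlink sublink of $L=(K,U_1,\dots,U_\ell)$, identify $G$ with $\Gamma/\langle\langle l_2,\dots,l_{\ell+1}\rangle\rangle$ where $\Gamma=\pi_1(S^3\setminus\nu(L),*)$, apply Milnor's presentation theorem (Theorem \ref{thm:presentation}) to $\Gamma/\Gamma_m$, and pass to the quotient using $(\Gamma/N)_m=\Gamma_m N/N$. The step you flag at the end as ``the main obstacle'' is, however, a genuine gap, and your proposal does not close it. Adjoining the surgery relations does strictly more than delete the relators $[a_i,R_m(l_{i+1})]$: the Tietze calculus gives
$$\frac{G}{G_m}\cong\big\langle\, x,a_1,\dots,a_\ell \ \big|\ [x,R_m(l_1)],\ F_m,\ R_m(l_2),\dots,R_m(l_{\ell+1})\,\big\rangle,$$
so the words $R_m(l_i)$, $i\ge 2$, survive as relators, and the lemma amounts to the claim that they are redundant modulo $F_m$ and $[x,R_m(l_1)]$. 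That claim is not a formal consequence of the hypotheses (unlink sublink, vanishing linking numbers): for $L$ the Borromean rings with $0$-surgery on $(U_1,U_2)$, one has $R_3(l_2)\equiv[a_2,x^{\pm1}]$ and $R_3(l_3)\equiv[x,a_1^{\pm1}]$ modulo $F_3$, and neither lies in the normal closure of $F_3\cup\{[x,R_3(l_1)]\}=F_3$, since $R_3(l_1)\equiv[a_1,a_2^{\pm1}]$ already makes $[x,R_3(l_1)]$ a weight-three commutator. So the sentence ``imposing $R_m(\lambda_i)=1$ \dots formally leaves the asserted presentation'' is where the argument breaks, and your proposed remedy (tracking the $\lambda_i$ through the Magnus expansion and invoking Theorem \ref{thm:stallings}) is a plan rather than a proof; Stallings' theorem needs surjectivity on $H_2$, which is exactly what is in question here.

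For comparison with the source: the paper's own proof follows your route verbatim and disposes of this step in one sentence (``$p_m(R_m(l_i))=1$ and $G/G_m$ is presented by \dots''), i.e., it likewise does not exhibit the $R_m(l_i)$, $i\ge 2$, as consequences of the retained relators. You have therefore reconstructed the paper's argument faithfully, including its weakest link, and you deserve credit for isolating that link explicitly. But as written your proof is incomplete at precisely that point: a correct proof must either show the relators $R_m(l_i)$, $i\ge 2$, are redundant in the stated presentation under some additional hypothesis, or retain them in the conclusion of the lemma.
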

\begin{proof}
Notice that $K \subset \#^{m}S^1 \times S^2$ can be viewed as the result of $0$-surgery on the trivial sublink $U =(U_1, U_2, ..., U_m)$ of the ordered, oriented $m+1$ component link $L = (K^{\prime}, U_1, U_2, ..., U_m) \subset S^3$.  By a Theorem 4 in \cite{milnor57} the quotients $G(L,S^3)/ G(L,S^3)_q$ have the presentation 
$$\langle x, a_1, ..., a_{\ell} \mid [x, R_m(l_1)], [a_1, R_m(l_2)], ..., [a_{\ell}, R_m(l_{\ell+1})], F_m \rangle$$
where $l_i$ is a word representing the $i^{\text{th}}$ longitude of $L$ and, for the meridian homomorphism $\varphi: F \rightarrow G(L,S^3)$, $R_m(l_i)$ is a word such that $\varphi(R_m(l_i)) \cong l_i \text{ mod } G(L,S^3)_m $. Since $K \subset \#^{\ell}S^1 \times S^2$ is obtained from $L \subset S^3$ by surgery along longitudes $l_2, ..., l_{\ell+1}$ we can see 
$$G(K, \inlineMl) \cong G(L,S^3) / \langle l_i \mid 2 \leq i \leq \ell+1\rangle.$$ 

A straightforward Mayer-Vietoris argument shows that viewing $\inlineMl \setminus \nu(L)$ as the result of surgery on $U$ results in a surjection on the fundamental groups $p: G(L,S^3) \twoheadrightarrow G(K, \inlineMl)$ induces surjections $p_m: G(L,S^3) / G(L,S^3)_m \twoheadrightarrow G(K, \inlineMl)/ G(K, \inlineMl)_m$. Since the kernel of $p$ is generated by longitudes $l_i$, $2 \leq i \leq \ell+1$  and each $l_i \cong \varphi (R_m(l_i)) \text{ mod } G(L,S^3)_m$ we see that $p_m(R_m(l_i))=1$ and $G(K, \inlineMl) / G(K, \inlineMl)_m$ is presented by 
$$\langle x, a_1, ..., a_{\ell} \mid [x, R_m(l)], F_m \rangle.$$

\end{proof}

This allows us to quickly obtain the following result, showing that $D(K)$ detects how deep the longitude of $K$ is in the lower central series of the fundamental group of its complement just as Milnor's invariants do.
\longitudethm

\begin{proof}
Consider the homomorphism $\varphi: F = \langle x, a_1, ..., a_{\ell} \rangle \rightarrow G(K)$ sending $x$ to a meridian of $K$ and each $a_i$ to a loop homotopic to the $i^{\text{th}}$ copy of $S^1 \times \{0\}$. This homomorphism clearly induces isomorphisms $H_1(F) \rightarrow H_1(G(K)) \cong \Z^{\ell+1}$. $D(K) = q$ implies $\text{H}_2(\inlineMl \setminus \nu(K)) / \Phi_q(\inlineMl \setminus \nu(K)) = 0$. As in the proof of Proposition \ref{prop:equivdef} we immediately see $F / F_q \rightarrow G(K) / G(K)_q$.

By Lemma \ref{gppresent}, we also know 
$$ \frac{G(K)}{G(K)_q} \cong \langle x, a_1, ..., a_{\ell} \mid [x, R_q(l)], F_q \rangle$$ 
and for it to be isomorphic to $F / F_q$, $[x, R_q(l)]$ must be in $F_q$. Thus, $[x, R_q(l)]$ is trivial in $F / F_q$. Since $x$ is a generator of $F$, this implies $R_q(l) \in F_{q-1}$ and thus is trivial in $F/F_{q-1}$. However, by properties of lower central series quotients we know $F / F_q \cong G(K) / G(K)_q$ implies $F / F_{q-1} \cong G(K)/ G(K)_{q-1}$ so $\varphi(R_q(l))$ is trivial in $G(K) / G(K)_{q-1}$. Recall $\varphi(R_q(l)) = la$ where $a \in G(K)_q$. By the third isomorphism theorem, $G(K) / G(K)_{q-1} \cong (G(K) / G(K)_{q}) / (G(K)_{q-1} / G(K)_{q})$. The image of a longitude $l$ in this quotient is in the same class as $la$ (as they differ by an element of $G(K)_{q}$). Thus, the residue class of a longitude $l \in G(K)$ inside $G(K) / G(K)_{q-1}$ is mapped under this isomorphism to the class represented by $la$ which is trivial. Thus, $l \in G(K)_{q-1}$. Furthermore, since $D(K) < q+1$, $\text{H}_2(\inlineMl \setminus \nu(K)) / \Phi_{q+1}(\inlineMl \setminus \nu(K)) \neq 0$ and by similar arguments, no $R_{q+1}(l)$ is in $F_q$ and therefore $l \not\in G(K)_q$.
\end{proof}

We further see that this Dwyer number can detect the weight of the first non-vanishing Massey product in the complement of null-homologous $K \subset \inlineMl$.

\masseythm

\begin{proof}
This follows directly from an argument in the proof by Cochran, Gerges, and Orr   in Proposition 6.8 in \cite{cochrangergesorr}. More specifically, if $D(K)=q$ then the knot group quotients $G(K)/G(K)_m$ are isomorphic to the free group quotients $F/F_m$ for $m < q$ by \cite{dwyer} and this isomorphism is induced by the meridional map $F \rightarrow G(K)$. This is exactly the group quotient criteria used by \cite{cochrangergesorr} in the context of $k$-surgery equivalent manifolds and we can see that by their proof, all Massey products of weight less than $q$ in $H^*(\inlineMl \setminus \nu(K))$ vanish and there is a Massey product of weight $q$ in this cohomology ring which is nonzero. 
\end{proof}

At first glance the definition of $D(K)$ looks quite unrelated to that of Milnor's invariants defined in Section \ref{section:background}; however, the previous results show $D(K)$ is a concordance invariant which detects how deep a knot's longitude is in the lower central series of its knot group and detects the weight of the first non-vanishing Massey product in the knot complement. These are exactly the properties of Milnor's invariants that make them so useful. This is no coincidence; as we will see, the Dwyer number of a knot in $\inlineMl$ is directly related to the Milnor's invariants of an associated link in $S^3$. We can in fact use the Milnor's invariants of this associated link to compute the Dwyer number. The following lemma lays the groundwork to do this.

\begin{lem} \label{lem:milnorinvtdwyer}
Let $L = (K^{\prime}, U_1, ..., U_{\ell}) \subset S^3$ be an $\ell+1$-component link such that the $\ell$-component sublink $U = (U_1, ..., U_{\ell}) $ is the unlink and 0-surgery on $U$ gives a null-homologous knot $K$ in $\inlineMl$. If all Milnor invariants $\bar{\mu}_L(I)$ of weight $|I| < q$ are trivial, then $D(K) \geq q$. 
\end{lem}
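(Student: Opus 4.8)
The plan is to translate $D(K)\ge q$ into a statement about the lower central series of the knot group, reduce that to a single condition on the $0$-framed longitude, and then verify the condition from the Magnus expansion using the vanishing of the Milnor invariants. By the formula for $D(K)$ in the null-homologous case together with Lemma \ref{computeinhomology}, it suffices to prove that $H_2(G)/\Phi_q(G) = 0$, where $G = \pi_1(\inlineMl\setminus\nu(K))$; this places $q$ in the defining set for $D(K)$ and hence gives $D(K)\ge q$. Mirroring the proof of Theorem \ref{thm:longitude}, I would work with the meridional map $\varphi\colon F\to G$ from the free group $F$ of rank $\ell+1$ (sending generators to a meridian of $K$ and to meridians of the $S^1\times S^2$ factors). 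Since $\varphi$ induces an isomorphism on $H_1$ and $H_2(F)=0$, Dwyer's Integral Theorem (Theorem \ref{thm:dwyerintegral}, (1)$\Rightarrow$(3)) reduces the whole problem to showing that $\varphi$ induces an isomorphism $F/F_q \cong G/G_q$, for then $H_2(G)/\Phi_q(G)\cong H_2(F)/\Phi_q(F)=0$.

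Next I would invoke the presentation of Lemma \ref{gppresent}, namely $G/G_q \cong \langle x, a_1,\dots,a_\ell \mid [x, R_q(l)], F_q\rangle$. This differs from the free nilpotent quotient $F/F_q$ only by the relation $[x, R_q(l)]$, so the canonical surjection $F/F_q \twoheadrightarrow G/G_q$ is an isomorphism provided $[x,R_q(l)]\in F_q$. It therefore suffices to show the longitude word satisfies $R_q(l)\in F_{q-1}$, since then $[x,R_q(l)]\in [F,F_{q-1}]=F_q$. (Only this easy inclusion is needed; the converse direction is never used.)

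The last step is where the hypothesis enters. In the proof of Lemma \ref{gppresent} the word $R_q(l)$ is produced as the Milnor representative of the longitude of the first component of $L=(K,U_1,\dots,U_\ell)\subset S^3$, so by the very definition of Milnor's invariants the coefficient of each monomial $X_{i_1}\cdots X_{i_{k-1}}$ in the Magnus expansion of $R_q(l)$ is $\overline{\mu}_L(i_1,\dots,i_{k-1},1)$, an invariant of weight $k$. The assumption that every $\overline{\mu}_L(I)$ with $|I|<q$ vanishes forces all Magnus coefficients of $R_q(l)$ of degree $\le q-2$ to be zero, so $M(R_q(l)) = 1 + (\text{terms of degree}\ge q-1)$; by Magnus's criterion this says exactly that $R_q(l)\in F_{q-1}$. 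Combining the three steps gives $\varphi\colon F/F_q\cong G/G_q$ and hence $D(K)\ge q$.

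I expect the difficulty to be bookkeeping rather than a single hard idea. The two points requiring care are: first, that the word $R_q(l)$ appearing in the presentation of $G/G_q$ really is the $S^3$ Milnor representative whose Magnus coefficients are the $\overline{\mu}_L(\cdot,1)$ (this must be read off from the construction in Lemma \ref{gppresent}, which builds $G/G_q$ from Milnor's presentation of $\pi_1(S^3\setminus L)/(\cdot)_q$ by killing the surgered longitudes); and second, that the vanishing hypothesis is applied only in its indeterminacy-free range, which is guaranteed because all lower-weight invariants vanish and so each relevant $\overline{\mu}_L$ is an honest integer.
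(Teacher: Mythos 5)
Your proof is correct, but it takes a genuinely different route from the paper's. The paper argues geometrically: it invokes the grope-theoretic form of Milnor's theorem to conclude that the vanishing of $\bar{\mu}_L(I)$ for $|I|<q$ lets each longitude of the unlink $L'$ bound a half $(q)$-surface in $S^3\setminus L$, caps these off with the surgery disks to obtain closed half $(q)$-surfaces disjoint from $K$, and checks by an intersection-number computation that their first stages represent the generators of $H_2(\inlineMl\setminus\nu(K))$ supplied by Lemma \ref{prop:mv}; hence $\Phi_q=H_2$ and $D(K)\geq q$ directly from the definition. You work entirely on the algebraic side: you reduce to $H_2(G)/\Phi_q(G)=0$ via Lemma \ref{computeinhomology}, obtain that from an isomorphism $F/F_q\cong G/G_q$ by Dwyer's theorem together with $H_2(F)=0$, and extract the isomorphism from the presentation of Lemma \ref{gppresent} plus the Magnus-expansion characterization of the vanishing hypothesis (all coefficients of degree $\leq q-2$ of $M(R_q(l))$ are the invariants $\bar{\mu}_L(i_1,\dots,i_{k-1},1)$ with $k<q$, so $R_q(l)\in F_{q-1}$ and the lone relation $[x,R_q(l)]$ already lies in $F_q$). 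Both arguments are complete; yours is essentially the logic the paper later runs in the opposite direction inside Theorem \ref{thm:computationthm}, where the group isomorphism is deduced from $D(K')\geq q$ via Dwyer rather than established first. What the paper's proof buys is the explicit geometric representatives --- the half $(q)$-surfaces generating $H_2$ --- which underlie the direct lower-bound computations illustrated in Figure \ref{fig:class4}; what yours buys is independence from the grope reformulation of Milnor's theorem and precise bookkeeping of which invariants are actually used. One point to make explicit in a final write-up: the isomorphism $F/F_q\cong G/G_q$ fed into Dwyer's theorem must be the one induced by the meridional map $\varphi$, which it is, since the generators $x,a_1,\dots,a_\ell$ of the presentation are exactly the $\varphi$-images of the free generators.
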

\begin{proof} 
Let $m_1, ..., m_l$ be meridians of $U$. Since $U$ is an unlink, the longitudes $u_i^{\prime}$ of its components $u_i$ bound disjoint disks, call these disks $\{D_i\}$. Once we perform $0$-surgery on $U$, the images of $u_i^{\prime}$ also bound disjoint disks $\Delta_i$ inside the surgery tori. Clearly $\{S_i^2 = D_i \cup \Delta_i\}$ is a generating set for $H_2(\inlineMl)$.

By Lemma 2.4 and Lemma 2.7 of \cite{freedmanteichner}, $\bar{\mu}_L(I) = 0$ for $|I| < q$ implies each longitude $l_i$ of $U$ is the boundary of the image of an order $q-1$ half-surface tower $\Sigma_i$ under a map $f_i : \Sigma_i \rightarrow S^3 \setminus L$. Notice that we can extend each of these maps to $\bar{f}_i: \Sigma_i \cup \Delta_i \rightarrow S_0^3(U) = \inlineMl$. Notice that the images of these order $q-1$ half-surface towers are disjoint from the knot $K$.

Let $F_i$ be the image of the first stage of $\bar{f}_i$. Consider the homology class $[F_i] \in \text{H}_2(\inlineMl)$. We know from intersection theory that 
$$[F_i] = \sum_{j=1}^{\ell}n_j[S_j^2]=\sum_{j=1}^{\ell}\big([F_i]\cdot [m_j] \big)[S_j^2] = [S_i^2]$$

This is because all pairwise linking numbers between components of $L$ are $0$ (since these linking numbers are just the weight $2$ Milnor invariants of L and $K$ is nullhomologous in the result of surgery on $U$), and by construction, $[F_i]\cdot [m_j] = lk(U_i, m_j)= \delta_{ij}$.

Since  $\text{H}_2(\inlineMl \setminus \nu(K)) \cong \mathbb{Z}^{\ell}$ by Lemma \ref{prop:mv} we see now that $\{[F_i]\}$ generates $\text{H}_2(\inlineMl \setminus \nu(K)) $. By construction, $\{[F_i]\}$ generates $\Phi_{q}(\inlineMl \setminus \nu(K))$ also  and therefore $\text{H}_2(\inlineMl \setminus \nu(K)) /\Phi_{q}(\inlineMl \setminus \nu(K)) =0$, so $D(K) \geq q$. 
\end{proof} 

Equipped with this lemma, we can now prove the following theorem allowing us to compute the the Dwyer number of a knot using the Milnor's invariants of an associated link. 

\begin{thm} \label{thm:computationthm}
Let $L = (K^{\prime} , U_1, ..., U_{\ell})$ be an ordered, oriented link in $S^3$ such that the sublink $U = (U_1, ..., U_{\ell})$ is an unlink and $0$-surgery on $U$ results in a null-homologous knot $K\subset \#^{\ell} S^1 \times S^2$.
If $\bar{\mu}_L(I) = 0$ for $|I| < q$ and, for some multi-index $J = (i_1, ... , i_q)$ of length $q$, $\bar{\mu}_L(J) \neq 0$, then $D(K) = q$.
\end{thm}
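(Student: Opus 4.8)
The plan is to establish the two inequalities $D(K^\prime)\ge q$ and $D(K^\prime)\le q$ separately. The lower bound is immediate: since $\overline{\mu}_L(I)=0$ for every $I$ with $|I|<q$, Lemma \ref{lem:milnorinvtdwyer} gives $D(K^\prime)\ge q$. All of the work goes into the matching upper bound $D(K^\prime)\le q$, for which I would use the nonvanishing weight-$q$ invariant $\overline{\mu}_L(J)$ to force the $0$-framed longitude of $K^\prime$ out of the relevant term of the lower central series.

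To set this up, write $\Gamma=\pi_1(S^3\setminus\nu(L))$, $G=\pi_1(\inlineMl\setminus\nu(K^\prime))$, and let $F$ be the free group on the meridians $x,a_1,\dots,a_\ell$. Performing $0$-surgery on $U$ fills in disks along the longitudes $l_2,\dots,l_{\ell+1}$, so the surgery induces a surjection $p\colon\Gamma\twoheadrightarrow G$ carrying the longitude $l_1$ of $K$ to the longitude $l$ of $K^\prime$. Because every Milnor invariant of weight $<q$ vanishes, Milnor's presentation (Theorem \ref{thm:presentation}) gives $\Gamma/\Gamma_q\cong F/F_q$ via the meridional map; because $D(K^\prime)\ge q$, Dwyer's theorem (Theorem \ref{thm:dwyerintegral}) gives $G/G_q\cong F/F_q$ via the same map. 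The induced surjection $p_q\colon\Gamma/\Gamma_q\twoheadrightarrow G/G_q$ is then a surjection between two abstractly isomorphic finitely generated nilpotent groups, hence an isomorphism (such groups are Hopfian). Consequently the depth of the longitude is unchanged by the surgery: $l\in G_q$ if and only if $l_1\in\Gamma_q$.

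It remains to show $l_1\notin\Gamma_q$, and here the nonvanishing invariant enters. Since all invariants of weight $<q$ vanish, the weight-$q$ invariants are first nonvanishing and hence enjoy Milnor's cyclic symmetry. The index $1$ (the component $K$) must appear in $J$: otherwise $J$ involves only indices of the sublink $U$, and by the sublink property $\overline{\mu}_L(J)=\overline{\mu}_U(J)=0$ since $U$ is an unlink. Cyclically permuting $J$ so that it ends in $1$ produces a nonzero $\overline{\mu}_L(I_0,1)$ with $|I_0|=q-1$, which is exactly the degree-$(q-1)$ Magnus coefficient of $l_1$; thus $l_1\in\Gamma_{q-1}\setminus\Gamma_q$, and by the previous paragraph $l\notin G_q$. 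If $D(K^\prime)$ were $\ge q+1$, the meridional isomorphism $F/F_{q+1}\cong G/G_{q+1}$ of Lemma \ref{gppresent} would force $R_{q+1}(l)\in F_q$ and hence $l\in G_q$, as in the proof of Theorem \ref{thm:longitude}; this contradiction gives $D(K^\prime)\le q$, and combining with the lower bound yields $D(K^\prime)=q$.

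The main obstacle is the middle step. A priori the surgered longitudes $l_2,\dots,l_{\ell+1}$ lie only in $\Gamma_{q-1}$, so one might worry that killing them collapses the leading term of $l_1$ and drops its depth below $q$. The Hopfian argument is precisely what rules this out: it shows that the lower bound $D(K^\prime)\ge q$ already forces $p_q$ to be injective, equivalently that each surgered longitude is already trivial in $\Gamma_{q-1}/\Gamma_q$, which is what lets me transport ``depth in $\Gamma$'' to ``depth in $G$.'' The secondary subtlety is the bookkeeping with cyclic symmetry and the sublink restriction; this is routine but must be invoked to guarantee the nonzero weight-$q$ invariant can be taken to end in the index of $K$, so that it genuinely records the depth of $l_1$ rather than of some surgered component.
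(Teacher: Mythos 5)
Your proposal is correct and follows essentially the same route as the paper: the lower bound comes from Lemma \ref{lem:milnorinvtdwyer}, and the upper bound comes from showing the $0$-framed longitude of $K$ is nontrivial in $\Gamma/\Gamma_q$ and transporting this through the surgery surjection $\Gamma/\Gamma_q\twoheadrightarrow G/G_q$ before applying Theorem \ref{thm:longitude}. The only difference is cosmetic: where you promote that surjection to an isomorphism via Hopficity of finitely generated nilpotent groups, the paper instead observes that it fits into a commutative triangle with the two meridional isomorphisms $F/F_q\cong\Gamma/\Gamma_q$ (Milnor) and $F/F_q\cong G/G_q$ (Dwyer), which forces it to be an isomorphism directly.
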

\begin{proof} Note that since $U$ is an unlink, a non-vanishing Milnor invariant of $L$ must involve the index $1$. Since all invariants of weight less than q vanish, $i_k=1$ for some $k$ and therefore $ \bar{\mu}_L(J)$ is equal to $\bar{\mu}_L(J^{\prime} 1))$ where $J^{\prime} 1$ is a cyclic permutation of $i_1, ., i_k, i_{k+1}.. , i_q$ by Theorem 10.14 in Hillman's work in \cite{hillman}.  Notice since $\inlineMl$ is constructed via attaching $2$ and $3$ cells to $S^3 \setminus \nu(L)$, we have a surjection $\varphi: G(L,S^3) \rightarrow G(K,\inlineMl)$ whose kernel is generated by the longitudes of $U$.

Let $F = \langle x_1, ..., x_{\ell+1} \rangle$ be a free group. By our condition on the Milnor invariants of $L$, a result of Milnor \cite{milnor57} shows the map $h: F \rightarrow G(L,S^3)$ sending $x_i$ to a meridian of the $i^{th}$ component of $L$ induces an isomorphism $F / F_q \cong G(L,S^3) /G(L,S^3)_q$. As shown in Lemma \ref{lem:milnorinvtdwyer}, our assumption also implies $D(K^{\prime}) \geq q$. Now, the definition of $D(K)$ combined with Dwyer's theorem then gives us that the map $g: F \rightarrow G(K,\inlineMl)$ sending $x_1$ to the meridian of $K^{\prime}$ and $x_i$ to the image under $\varphi$ of a meridian of $U_{i-1}$ for $ 2 \leq i \leq \ell+1$ induces an isomorphism $F / F_{q} \cong G(K,\inlineMl)/ G(K,\inlineMl)_{q}$.

The conditions on $\bar{\mu}_L(I)$ give us that a longitude $\lambda_{K^{\prime}}$ of $K^{\prime}$ does not lie in $G(L, S^3)_{q}$. In other words, such a class is nontrivial in $G(L,S^3) /G(L,S^3)_{q}$. By properties of the lower central series and by construction, we have the following commutative diagram.

\begin{center}
\begin{tikzcd}
G(L,S^3) / G(L,S^3)_{q}   \arrow[rr, "\varphi_{q}"] & & G(K,\inlineMl) / G(K,\inlineMl)_{q}  \arrow[dl, "g_{q}"]   \\
 & F / F_{q} \arrow[ul, "h_{q}"] &
\end{tikzcd}
\end{center}

We see that $\varphi(\lambda_{K^{\prime}})$ is conjugate to a longitude of $K$. From our diagram, we see $G(K,\inlineMl) / G(K,\inlineMl)_{q} \cong G(L,S^3)/ G(L,S^3)_{q}$ by the isomorphism $g_{q} \circ h_{q}$ and thus $\varphi_q(\lambda_{K^{\prime}})$ is nontrivial in $G(K,\inlineMl) / G(K,\inlineMl)_{q-1}$. 
Since $G(K,\inlineMl)_{q-1}$ is normal, this means no possible basing of a $0$-framed longitude of $K^{\prime}$ is in $G(K,\inlineMl)_{q-1}$ and by Theorem \ref{thm:longitude}, $D(K) < q+1$.
\end{proof}

We see this theorem gives us a way to construct many null-homologous knots which are not concordant to each other (or the unknot) using various realization theorems for Milnor's invariants. In particular, the beautiful examples involving Bing doubling along a tree by Tim Cochran in \cite{cochranmemoir} allow us to construct links in $S^3$ whose only non-zero Milnor's invariants are a specific weight $q$ (and he further outlines procedures for finding more general ``antiderivatives" of links given a specific iterated commutator of generators.) It is important to note that if a knot $K \subset \#^n S^1 \times S^2$ has an associated $n+1$ component link $L$ as in Theorem \ref{thm:computationthm} which is not link homotopic to the trivial link, the Dwyer number is perhaps a more subtle invariant than is needed as the homotopy class of the knot in the $3$-manifold is non-trivial. Therefore, to construct examples illustrating the utility of $D(K)$ to show a knot is not concordant to the unknot, they should correspond to links in $S^3$ which are link homotopic to the trivial link. We can construct such examples using the family of ``Sydney's links" defined in \cite{cochranmemoir} and denoted $L_{2n}$ where $n \geq 5$.

\begin{figure}[h] 
\centering
  \includegraphics[width=.7\linewidth]{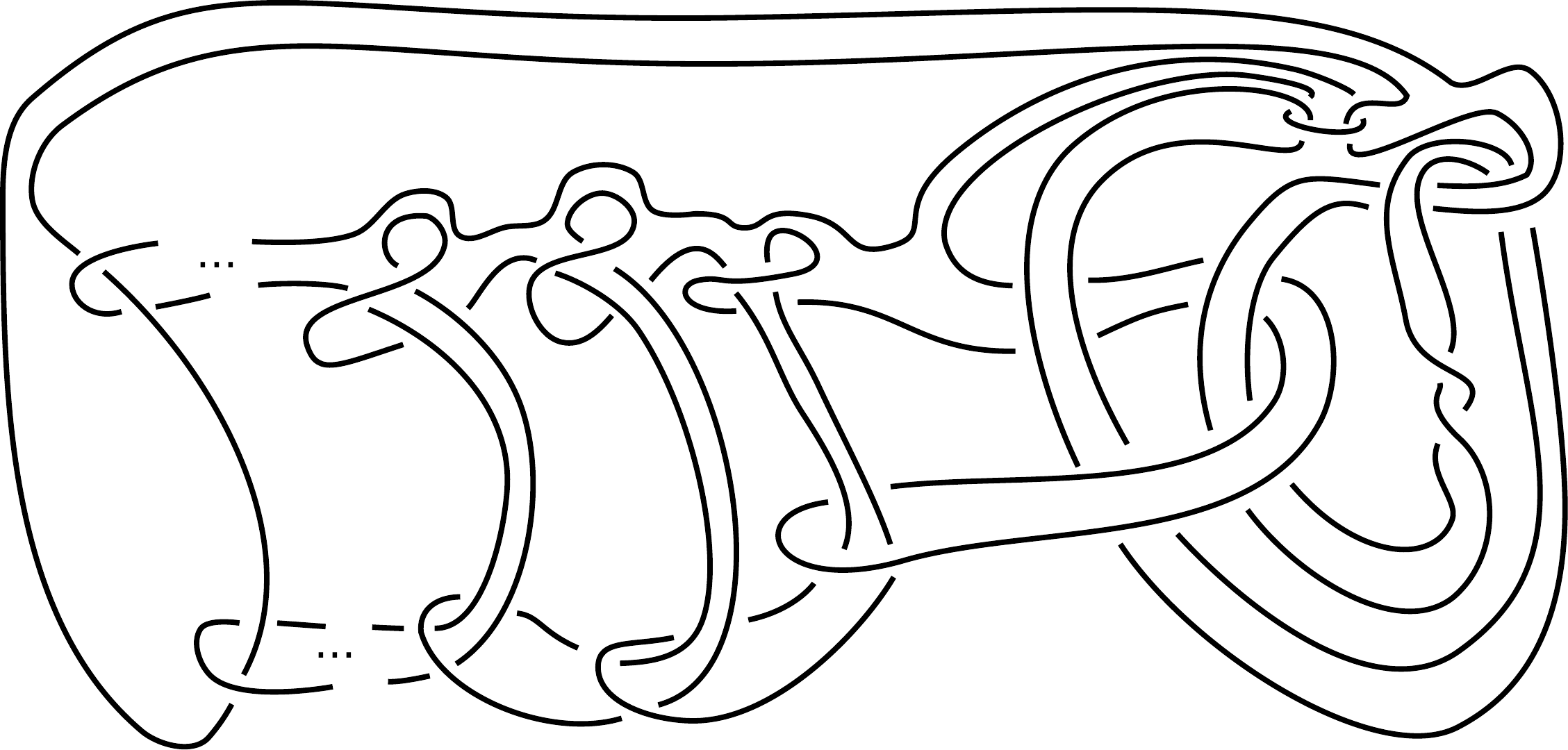}
\caption{A family of two-component links $L_{2n}$, $n \geq 5$ which are homotopically unlinked, but have Milnor invariant $\bar{\mu}(1...12211221...1)=(-1)^{n+1}$ where each string $1...1$ has $1$ $n-3$ times. The ellipses denote $n-3$ repetitions of the repeated tangle.}
\label{fig:L2n}
\end{figure}

\slicenotconc

\begin{proof}
Note that $L \setminus U$ is a slice knot in $S^3$, and therefore it bounds a disk $\Delta$ in $B^4$. Denote the result of attaching $0$-framed $2$-handles to $B^4$ along each of the $n$ components of $U$ as $X$. Since $\Delta$ is properly embedded, we immediately see the image of $L \setminus U$ is slice. 

Now, assume $L$ has some lowest weight non-vanishing Milnor invariant $\bar{\mu}_L(I)$ of weight $q$. Then $D(K) = q$ by Theorem \ref{thm:computationthm} and therefore by Theorem \ref{thm:concinvt} $K$ is not concordant to the unknot.
\end{proof}

\realization
\begin{proof}
We see that Sydney's links with $0$-surgery performed on the component in Figure satisfy the hypothesis of Proposition \ref{thm:slicenotconc}. More precisely, they give a family of knots $K_{2n}$ in $S^1 \times S^2$ which are null-homotopic, slice in $D^2 \times S^2$, and which are not concordant to the unknot in $S^1 \times S^2 \times I$. 

\begin{figure}[h] 
\centering
  \includegraphics[width=.7\linewidth]{surgL2n.pdf}
\caption{The result of $0$-surgery on one component of $L_{2n}$. By Theorem \ref{thm:computationthm}, these knots have Dwyer number $2n$.}
\label{fig:sydsurginonecopy}
\end{figure}

Note that it is straightforward to construct links in further numbers of copies of $S^1 \times S^2$ which are null-homotopic, slice in boundary connected sums of $D^2 \times S^2$, and which are not concordant to the unknot in $S^1 \times S^2 \times I$. Let $L^{\prime}_{2n}$ be the l with first two components $L_{2n}$ labeled $x$ and $y$ with an additional $n-5$ components labeled $z_1$ through $z_{n-5}$ as pictured in \ref{fig:sydsurgmod}. Using the same surfaces for the first two components as in \cite{cochranmemoir} and additional punctured tori for each new component, we can compute that the lowest non-vanishing Milnor invariant is weight $4$. From the diagram, we see the surfaces for $x$ and $y$ can be perturbed to intersect in the connected curve $c(xy)$ and the surfaces for the additional components intersect the surface for $x$ as shown. Since $c(xy)$ and $c(xz_1)$ are weight two curves (using the terminology of Cochran) which have non-zero linking number, and one can check from the diagram there are no non-zero linkings of lower weight curves, the link in question has a non-vanishing Milnor invariant of weight $4$. Therefore, the result of $0$-surgery on the same component of the $L_{2n}$ sublink and the additional unlinked components has Dwyer number $4$ by Theorem \ref{thm:computationthm}.\end{proof}

\begin{figure}[h] 
\centering
  \includegraphics[width=\linewidth]{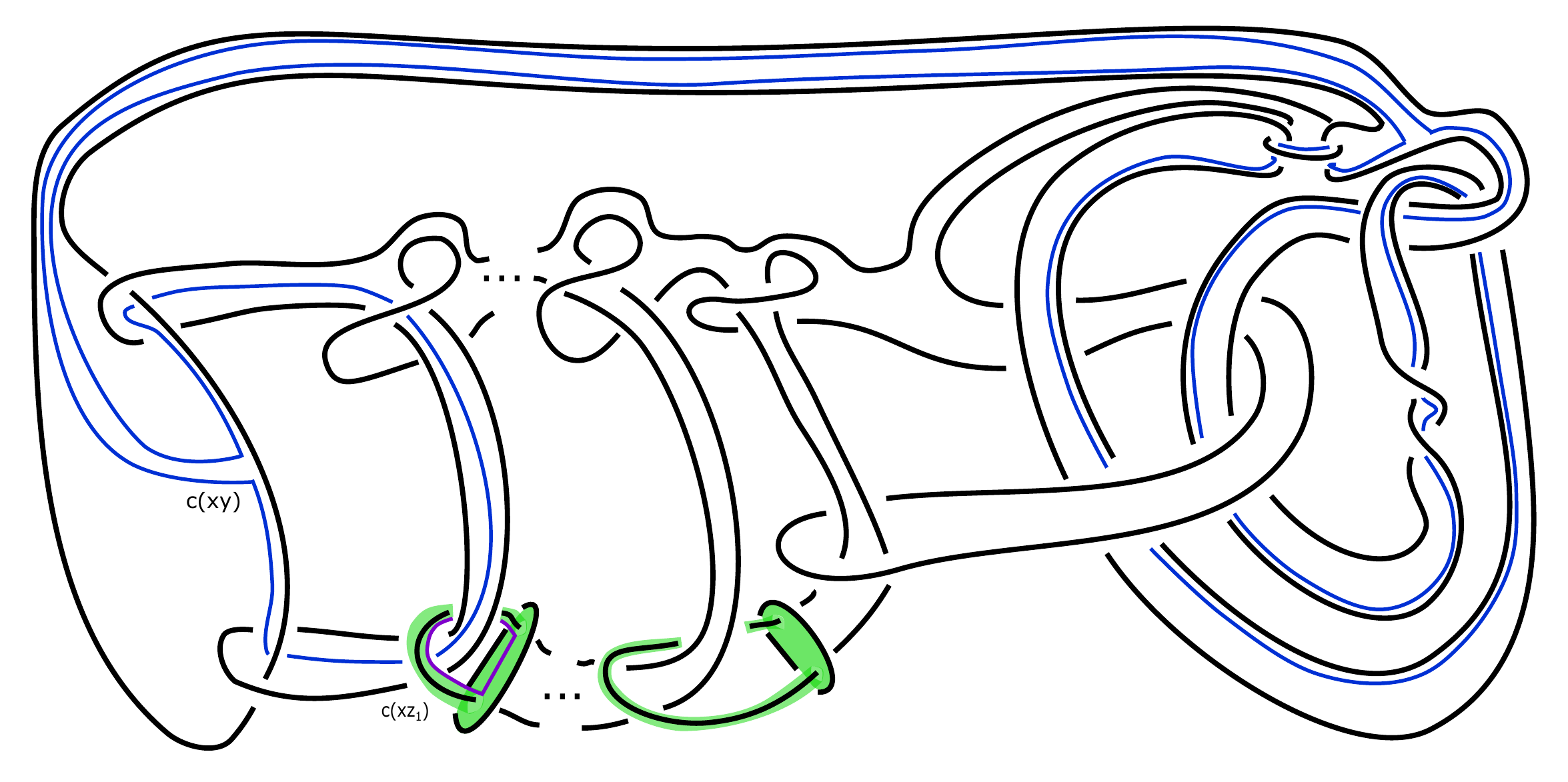}
\caption{The links $L^{\prime}_{2n}$ along with surface intersection curves $c(xy)$ and $c(xz_1)$.}
\label{fig:sydsurgmod}
\end{figure}

Note that it is often possible to get a lower bound on the Dwyer number directly by constructing embedded half-surface towers of order $m$ which generate the second homology of the complement. In this way, if we have a fixed $K$ for which $D(K)$ is known, we can sometimes directly show a specific $J$ is not concordant to it by demonstrating the appropriate half-surface tower of order $m$ where half of a symplectic basis for the surface bounds further embedded surfaces as in Figure \ref{fig:class6}. Pictured is a knot in $J \subset \#^2 S^1 \times S^2$ with an embedded half $(6)$-surface and an embedded $S^2$, together they generate $H_2(\#^2 S^1 \times S^2 \setminus K)$ and thus $J$ is not concordant to $L_2$ as $D(L_2)=4$ but $D(J) \geq 5$. Note that this bound is not sharp as a $G_m$-surface is not exactly the same thing as an embedded surface tower; from Theorem \ref{thm:computationthm} and Section \ref{section:background} we can compute that the lowest weight non-vanishing Milnor's invariant underlying link in Figure \ref{fig:L2n} is $\bar{\mu}_L(111111111112)=1$ and thus $D(J) = 12$. 

\begin{figure}[h] 
\centering
  \includegraphics[width=.7\linewidth]{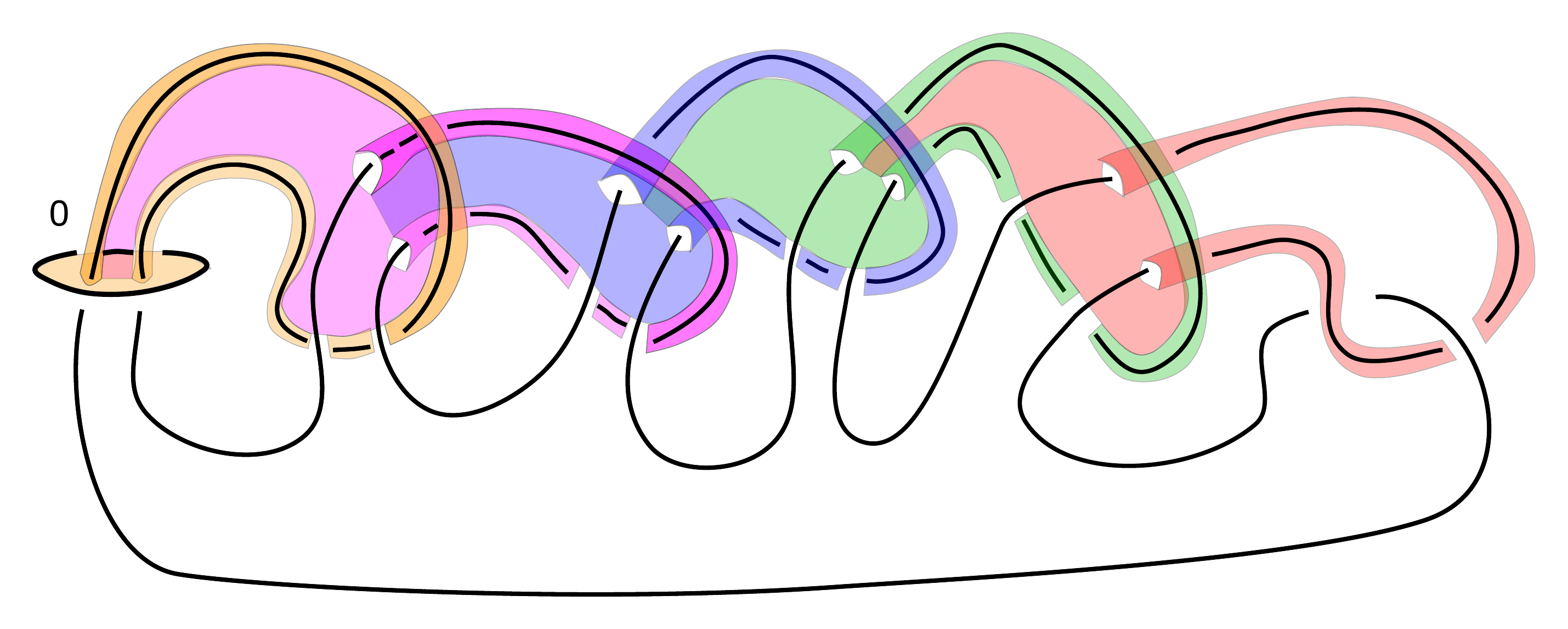}
\caption{An embedded order $6$ half-surface tower  in $ S^1 \times S^2 \setminus \nu( J)$ whose bottom stage generates $H_2(S^1 \times S^2 \setminus \nu( J))$.}
\label{fig:class6}
\end{figure}

Recall that our construction of $D(K)$ arose originally by studying the effect of the knotification construction on a link. 

\begin{definition}[Knotified Link \cite{ozszholoknot} ] \label{def:knotifiedlink}
Let L be an n-component link in $S^3$. The knotified version of this link is denoted $\kappa(L)$, lies in $\#^{n-1} S^1 \times S^2$, and is obtained in the following way: 

Fix $n-1$ embedded copies of $S^0$ inside $L$ labeled $\{p_i, q_i\}$ so that if each $p_i$ and $q_i$ are identified, the resulting quotient of L is a connected graph. Now, view each pair as the feet of a 4-dimensional 1-handle we can attach to $B^4$, and note that looking at the boundary of the result we get L inside $\#^{n-1} S^1 \times S^2$. We can now band sum the components of L together inside the boundary of these 1-handles to get a knot $\kappa(L)$ inside  $\#^{n-1} S^1 \times S^2$. We call $\kappa(L)$ the knotification of the link L.
\end{definition}

\begin{figure}[ht]\centering
\begin{subfigure}{.45\textwidth}
\centering
\includegraphics[scale=.32]{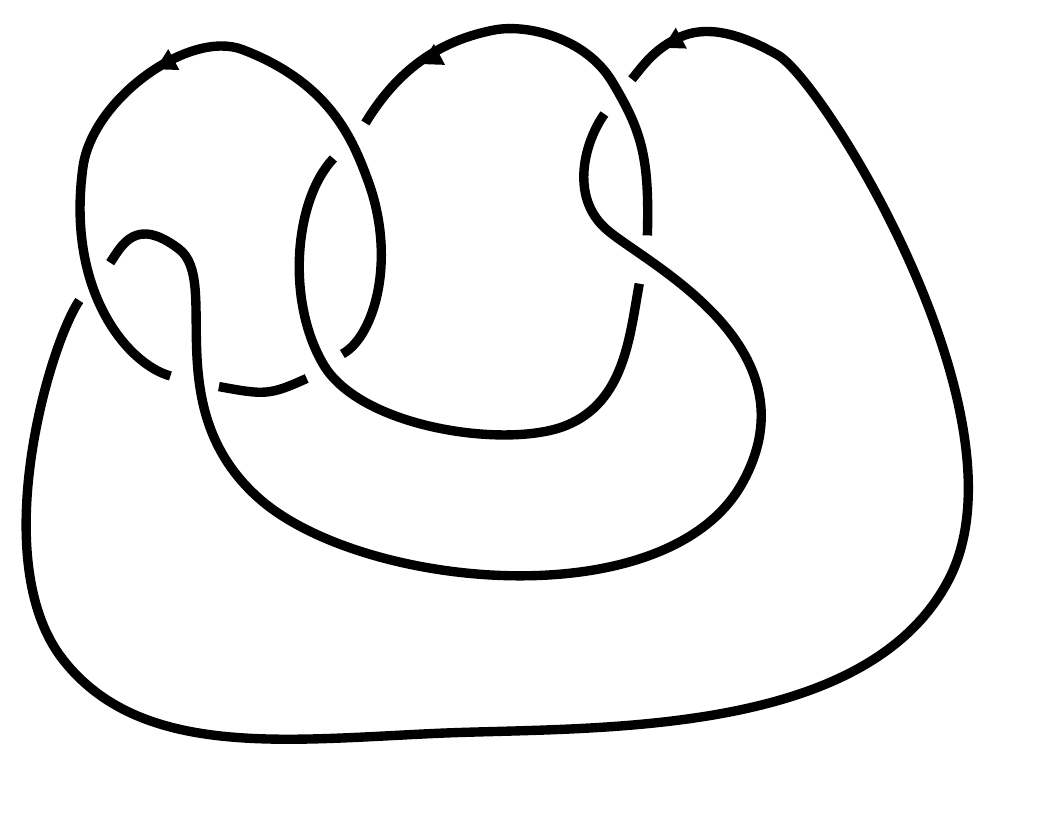}

\end{subfigure}
\begin{subfigure}{.45\textwidth}
\centering
\includegraphics[scale=.29]{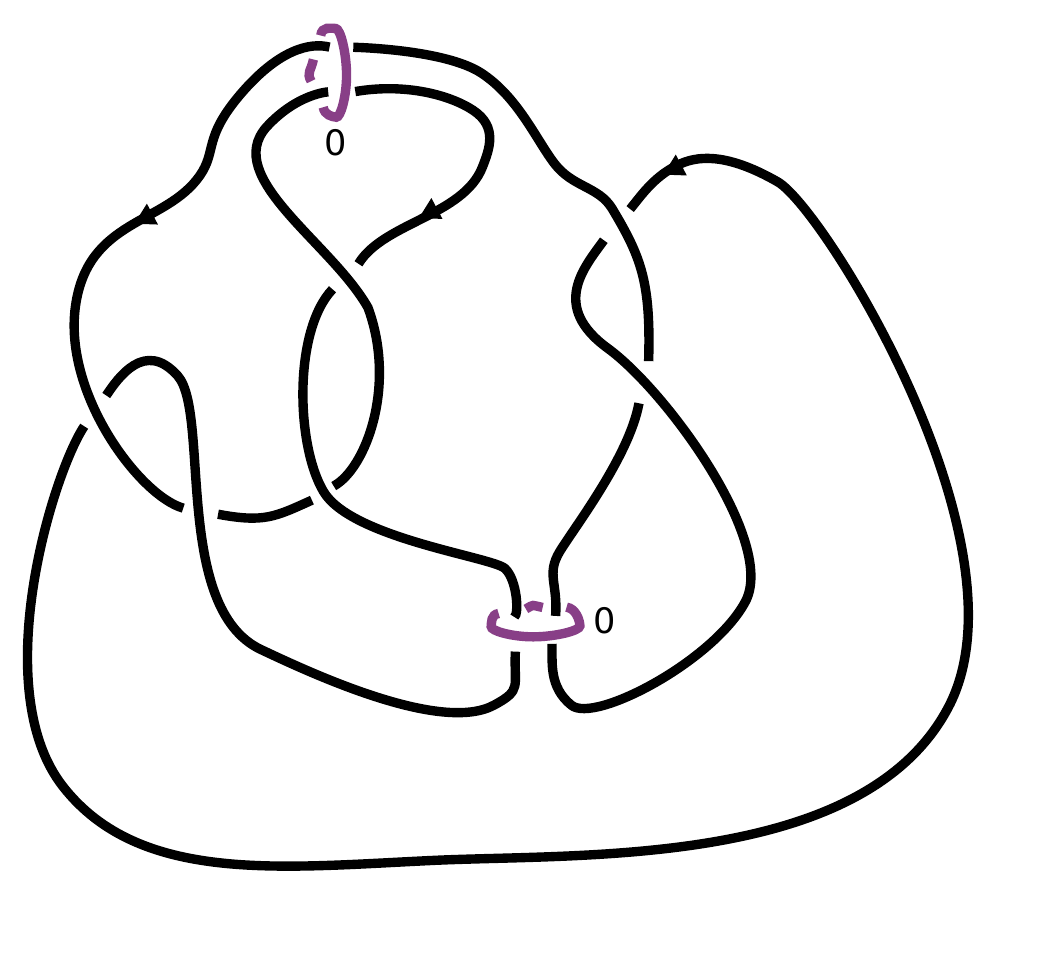}

\end{subfigure}
 \caption{A link $L \subset S^3$ and its knotification $\kappa(L) \subset S^1 \times S^2 \# S^1 \times S^2$.}
\end{figure}

To address this case, we must first introduce the following notion.

\begin{definition}[Interior band sum \cite{cochranmemoir}] Given a link $L$ of $m$ components and disjoint, oriented, embedded bands $b_1, ... , b_k$ in $S^3$ homeomorphic to $[0,1] \times [-1,1]$ whose intersections with $L$ are along the initial and terminal arcs $b_i(\{0,1\} \times [-1,1])$ which lie in $L$ and have the opposite orientation, a new (polygonal) link can be defined by deleting the collection of arcs $\{0,1\} \times [-1,1]$ and replacing them with $[0,1] \times \{-1,1\}$.
\end{definition}

This definition allows us to use a slight restatement of Theorem 8.9 from \cite{cochranmemoir} which we will use in calculations. The restatement also corrects an indexing error in the original proof. 

\begin{thm}[Cochran \cite{cochranmemoir}]  Suppose that $b(L)$ is an interior band sum involving $k$ bands. If the first non-vanishing $\bar{\mu}$-invariant of L is weight $\leq r < \infty$, then the first non-vanishing $\bar{\mu}$-invariant of $b(L)$ is weight greater than $\left \lfloor{\frac{r}{(k+1)}}\right \rfloor$.
\end{thm}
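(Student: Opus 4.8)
The plan is to translate the statement entirely into the language of the lower central series and the Magnus expansion, exactly as in Section~\ref{section:background}, and then to reduce it to a single degree estimate for the longitudes of $b(L)$. Writing $\Gamma = \pi_1(S^3\setminus\nu(L),*)$ and $G=\pi_1(S^3\setminus\nu(b(L)),*)$, the hypothesis that the first non-vanishing $\overline{\mu}$-invariant of $L$ has weight $r$ is equivalent, by Theorem~\ref{thm:presentation} together with the Magnus criterion of \cite{magnus}, to the statement that every longitude of $L$ lies in $\Gamma_{r-1}$; dually, the conclusion is equivalent to showing that every longitude of $b(L)$ lies in $G_{s}$ with $s=\lfloor r/(k+1)\rfloor$, i.e. that no $\overline{\mu}$-invariant of $b(L)$ of weight $\le s$ can be non-zero. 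Thus the theorem is reduced to the implication: a non-zero weight-$w$ invariant of $b(L)$ forces a non-zero invariant of $L$ of weight \emph{strictly} less than $(k+1)w$.

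Next I would establish the combinatorial relationship between the two links induced by the $k$ bands. Each band attaches along a pair of oppositely-oriented arcs, so after choosing compatible meridians for both links one checks that a meridian of $b(L)$ is carried to a meridian of $L$, and, more importantly, that a longitude $\eta$ of $b(L)$ is traced by a cyclic word which the $k$ band-cores cut into at most $k+1$ maximal sub-arcs, each running along the original link $L$. I would record $\eta$ as a product of these $k+1$ blocks together with the band-transition terms, and compare this expression with the words representing the longitudes of $L$ under the natural homomorphism $\Gamma\to G$ whose kernel is generated by the band relations.

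The heart of the argument---and the step I expect to be the main obstacle---is the degree bookkeeping that produces the factor $(k+1)$. Feeding the block decomposition of $\eta$ into the Magnus expansion $M$ and tracking the lowest-degree surviving monomial, the point is that, because there are at most $k+1$ blocks, a non-zero coefficient in degree $w-1$ in the expansion of $\eta$ can be concatenated across the blocks to manufacture a non-zero Magnus coefficient of degree at most $(k+1)(w-1)+k=(k+1)w-1<(k+1)w$ in one of the longitudes of $L$; equivalently, a non-trivial weight-$w$ Massey product for $b(L)$ pulls back to a non-trivial Massey product of weight $<(k+1)w$ for $L$. Carrying this estimate out with the correct bounds on the degrees contributed by the blocks and by the $k$ transition terms is exactly where the indexing must be done carefully: the off-by-one in the original proof in \cite{cochranmemoir} lives here, and getting the floor and the strict inequality right is what makes the final bound come out as $\lfloor r/(k+1)\rfloor$ rather than something weaker.

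Finally I would assemble the pieces. Given the estimate, if some longitude of $b(L)$ failed to lie in $G_s$ then $b(L)$ would have a non-zero invariant of some weight $w\le s=\lfloor r/(k+1)\rfloor$, and the degree estimate would produce a non-zero invariant of $L$ of weight $\le (k+1)w-1\le (k+1)s-1\le r-1<r$, contradicting that every longitude of $L$ lies in $\Gamma_{r-1}$. Two technical points still need checking: that the oppositely-oriented (``interior'') band condition is precisely what forces the transition terms to be single meridians, so that the additive $k$ in the degree count is sharp; and that the indeterminacy of Milnor's invariants does not interfere, which can be arranged by working throughout in the free nilpotent quotient $F/F_{(k+1)w}$, where the relevant longitude words are honestly defined. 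As a geometric sanity check, the same count is visible in the half-$(m)$-surface picture of Definition~\ref{def:cc}: a class-$r$ grope bounded by a longitude of $L$, once the $k$ bands are absorbed into its bottom stage, is cut into $k+1$ pieces and must contain a sub-grope of class at least $r/(k+1)$, which is the geometric shadow of the same inequality.
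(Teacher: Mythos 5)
First, a point of comparison: the paper does not prove this statement at all --- it is quoted from Cochran's memoir \cite{cochranmemoir} (with an indexing correction), so there is no internal proof to measure your attempt against; the relevant argument is Cochran's, which runs through his surface-system/Massey-product machinery rather than raw Magnus bookkeeping. Your framing is reasonable as far as it goes: reducing to a statement about how deep the longitudes lie in the lower central series, reading the (garbled) hypothesis ``weight $\leq r$'' as ``all invariants of weight $<r$ vanish,'' and doing the final arithmetic with $s=\lfloor r/(k+1)\rfloor$ is the right frame.

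The genuine gap is the step you yourself flag as ``the main obstacle'': the degree estimate producing the factor $(k+1)$ is the entire content of the theorem, and the mechanism you describe does not exist. There is no homomorphism $\Gamma\to G$ ``whose kernel is generated by the band relations'' --- $b(L)$ is not contained in $L$, so one can only pass through $\pi_1(S^3\setminus(L\cup B))$, which surjects onto \emph{both} $\Gamma$ and $G$; consequently every available map points the wrong way for your argument, which needs to push a non-vanishing coefficient of $b(L)$ \emph{back} to one of $L$. Worse, the $k+1$ ``blocks'' into which a longitude of $b(L)$ is cut are arcs, not loops: they are not elements of any group, have no Magnus expansion, and there is no defined operation of ``concatenating across blocks to manufacture a coefficient of a longitude of $L$'' --- knowing that the full longitudes of $L$ lie in $F_{r-1}$ says nothing about their sub-arcs. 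Finally, even granting your estimate as stated, the arithmetic does not close: a non-zero Magnus coefficient of degree $\leq (k+1)(w-1)+k=(k+1)w-1$ corresponds to an invariant of weight $\leq (k+1)w$ (weight is degree plus one), and with $w\leq s$ this gives weight $\leq (k+1)s\leq r$, which does not contradict the vanishing of invariants of weight $<r$; you would need the strictly sharper bound in weight, which your block count does not deliver. To repair this you would need Cochran's actual Theorem 8.13 argument (or an equivalent geometric one via surface systems), not a refinement of the Magnus-expansion sketch.
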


We can now prove the following bound on the Dwyer number of a knotified link.

\knotifiedbound
\begin{proof}
The knotification of $L$ can also be constructed using the following process. First, take the disjoint union of $L$ with an $n-1$-component unlink $U$ and perform an interior band sum on the sublink $L \subset L \sqcup U$ 
with $n-1$ bands, one through each component of the added unlink. We will call the resulting link $J$. Notice that $U$ is unchanged by this procedure and $L$ has fused to become a one-component sublink which we call $L^{\prime}$.  We finally arrive at the knotification by performing $0$-surgery on the sublink of $L^{\prime}$ corresponding to $U$. By the above theorem of Cochran, the first nonzero $\bar{\mu}_{J}$ is weight $r+1$. The result then follows from Theorem \ref{thm:computationthm}.
\end{proof}

We conclude this section by illustrating the following properties of $D(K)$.

\begin{prop} In $\inlineMl$ for any $l\in \Z_{+}$, every null-homologous knot $K \subset \inlineMl$ has $D(K) \geq 3$. Moreover, every integer larger than $3$ can be realized as the Dwyer number of a slice knot in $\inlineMl$. 
 \end{prop}
\begin{proof} %fix
Since $K$ is null-homologous, from Lemma \ref{gppresent} we can see that the only relation in $G/G_q$ not coming from a product of simple commutators in $F_q$ is $[x, R_q(l)]$ where $R_q(l)$ is freely homotopic to a $0$-framed longitude of $K$ modulo $F_q$ and is therefore in $F_2$. Thus, $[x, R_q(l)] \in F_3$ and $D(K) \geq 3$.
If $K$ is the unknot, $\inlineMl \setminus \nu(K)$ is homeomorphic to the connected sum of $\inlineMl$ with a solid torus. Thus every generator of $\text{H}_2(\inlineMl \setminus \nu(K))$ can be represented by a map of a half-surface tower of order $m$ of arbitrary class. 

To see that every integer $i$ larger than $3$ is realized as the Dwyer number of a knot,we will construct a simple family of knots $J_i$. First, consider the oriented Hopf link inside $S^3$. To get $J_3$, double one component of it to get the Borromean rings and perform $0$-surgery on two components as shown in Figure \ref{fig:familyJ}. Since the Borromean rings have first nonzero $\bar{\mu}$-invariant $\bar{\mu}(123)$, by Theorem \ref{thm:computationthm} $D(J_3) = 3$. Iterate this Bing-doubling process on the doubled Hopf link as shown in figure \ref{fig:familyJ} and perform $0$-surgery on the sublink leaving out exactly one new component after this doubling procedure. 

Notice that the underlying link in the surgery diagram is the result of the ``Bing doubling along a tree" procedure described in \cite{cochranmemoir} and thus the resulting link from doubling a single component of $J_i$ to get $J_{i+1}$ has a non-vanishing Milnor invariant of weight $i+1$. Additionally, all Milnor invariants of smaller weight vanish, and therefore by Theorem \ref{thm:computationthm} $D(J_{i})=i$ for all $i \geq 3$. Each of these knots $J_i$ bounds an obvious disk in $\natural^{i-1} D^2 \times S^2$ as in Figure \ref{fig:disk}.
\end{proof}

\begin{figure}[h] 
\centering
\begin{subfigure}[b]{0.45\textwidth}
\centering
  \includegraphics[width=.6\linewidth]{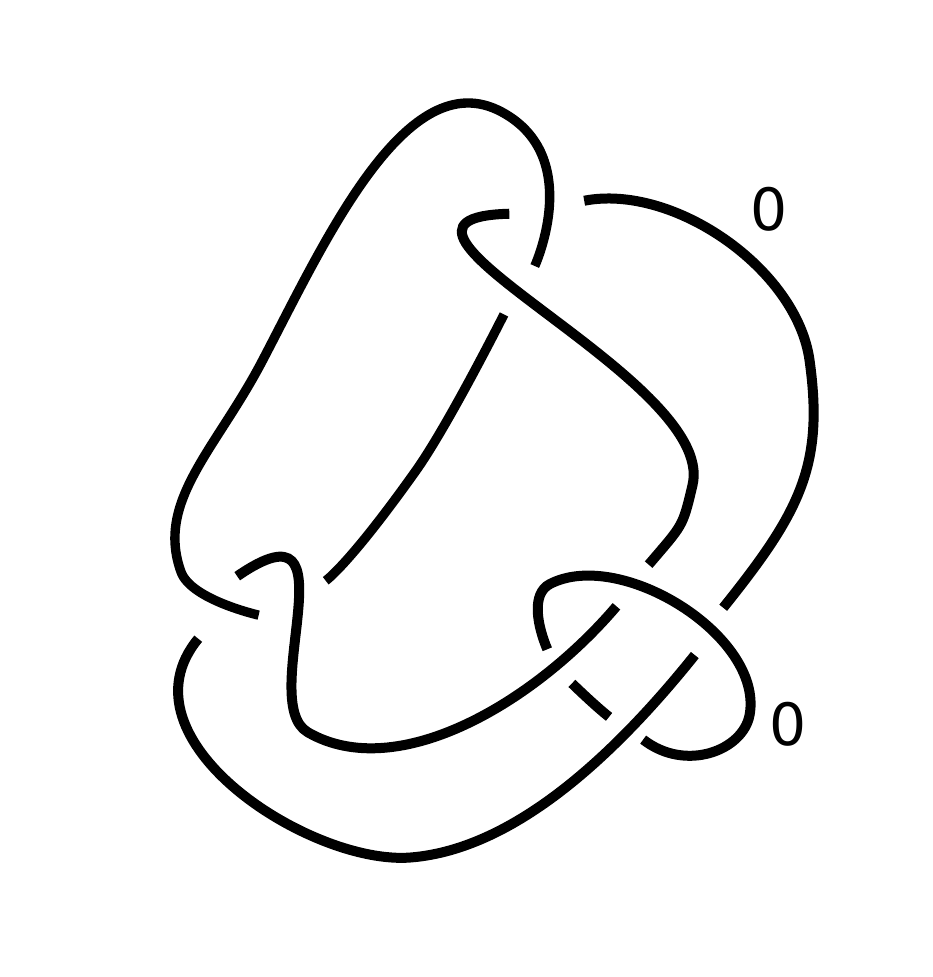}

\end{subfigure}
\begin{subfigure}[b]{0.45\textwidth}
\centering
  \includegraphics[width=.6\linewidth]{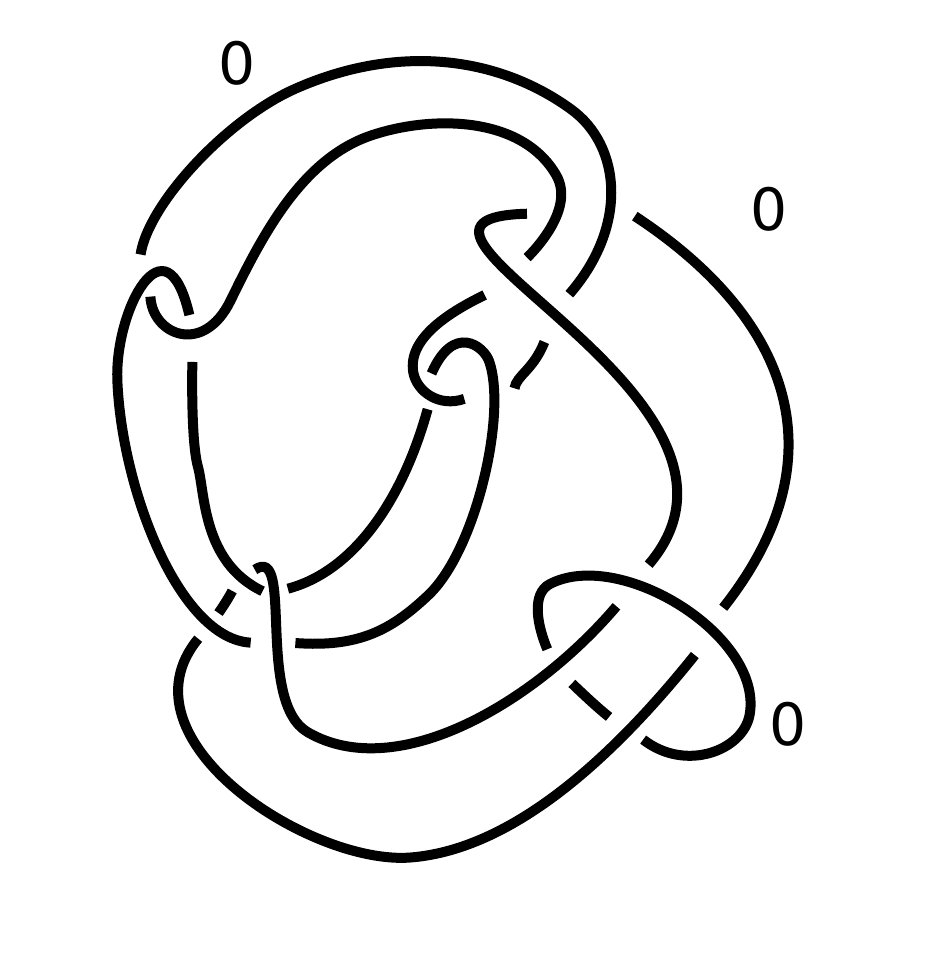}

\end{subfigure}
\caption{$J_3 \subset \#^2 S^1 \times S^2$ and $J_4 \subset \#^3 S^1 \times S^2$.}
\label{fig:familyJ}
\end{figure}

\begin{figure}[h] 
\centering
  \includegraphics[width=.25\linewidth]{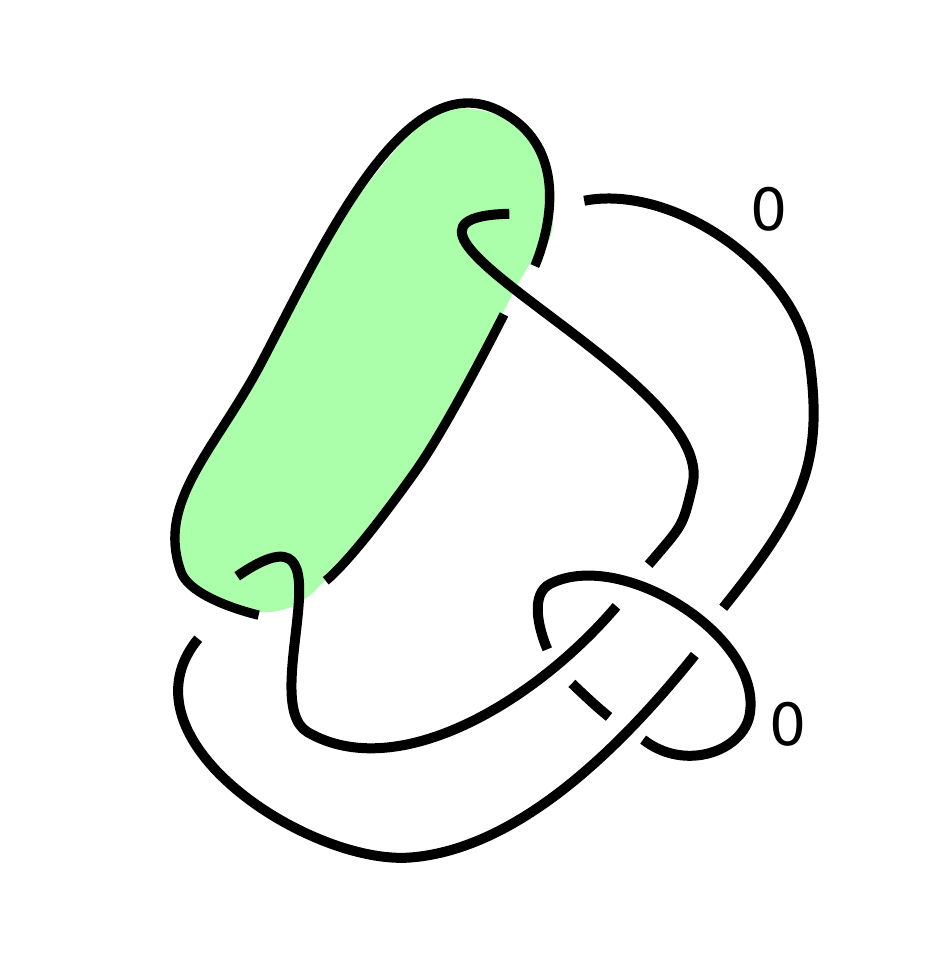}
\caption{A slice disk for $K_2$.}
\label{fig:disk}
\end{figure}

We also see that our invariant has applications to recent work of Celoria in \cite{celoriaac}.

\begin{definition} %did he define this?
A knot $K$ inside a $3$-manifold $Y$ is local if there is an embedded $B^3 \subset Y$ such that $K \subset B^3$. Similarly, a link $L$ inside a $3$-manifold $Y$ is local if there is an embedded $B^3 \subset Y$ such that $L \subset B^3$
\end{definition}

\begin{definition}[Celoria \cite{celoriaac}]
Two knots $K_0$ and $K_1$ in a $3$-manifold $Y$ are almost concordant $K_0 \sim_{ac} K_1$ if there are local knots $K_0^{\prime}$ and $K_1^{\prime}$ in $Y$ such that $K_0 \# K_0^{\prime}$ is concordant to $K_1 \# K_1^{\prime}$ in $Y \times I$.
\end{definition}

Now, we can see the following.

\almostconc
\begin{proof}
There is a surgery diagram for $K$ as an $l+1$-component link in $S^3$ with $0$-surgery performed on $l$ of the components (note this diagram is certainly not unique). By Theorem \ref{thm:computationthm}, $D(K_i)$ is exactly the weight of the first non-vanishing Milnor invariant of this link in $S^3$ which we will call $L$. By abusing notation, let $K^{\prime}$ also refer to the image of the local knot inside $S^3$. We then see that infecting the component of $L$ corresponding to $K$ after surgery by the knot $K_i^{\prime}$ results in a link $L_i^{\prime}$ with all of the same Milnor invariants as $L_i$ by Otto in Proposition 4.2 of \cite{cottothesis}. Finally, it is clear that there is a $0$-surgery on an $l$-component sublink of $L_i^{\prime}$ gives the sum $K \# K^{\prime} \subset \inlineMl$ and thus applying Theorem \ref{thm:computationthm} again we have $D(K) = D(K \# K^{\prime})$ when $K^{\prime}$ is a local knot.
\end{proof}

Therefore Theorem \ref{thm:realization} also gives us families of knots in $\#^i S^1 \times S^2$ for each $i$ which not almost-concordant in $\#^i S^1 \times S^2$ to the unknot (or each other).

%%%%%%%%%%%%%%%%%%%%   End of main body of article
%
%                             References
%
%   BiBTeX users uncomment the following line:
%
\bibliographystyle{gtart}

\bibliography{AGTbib} 

\end{document}